\documentclass[12pt]{amsart}
\usepackage{amsfonts,amssymb,amscd,amsmath,enumerate,verbatim, graphicx, color}
\usepackage[latin1]{inputenc}
\usepackage{amscd}
\usepackage{latexsym}
\usepackage{url}
\usepackage{mathptmx}
\usepackage{multicol}
\input xy
\xyoption{all}
%\usepackage{showkeys}
%\usepackage{secdot}
%\sectiondot{subsection}

%
%------    GENERAL MACROS    -----
%
% Standard rings and fields, affine and projective space
%
               % the font for N,Z,Q,R,C

%
%------------------------------------------------
% Symbols in "Fraktur"
%
\def\frk{\mathfrak}               % font for "Fraktur"

\def\Phi{{\frk N}}
%
%------------------------------------------------
% Small letters in bold
%

%---------------------------------------------------
% Greek letters

%
\def\opn#1#2{\def#1{\operatorname{#2}}} % to make operators
%------------------------------------------------
% Numerical invariants of rings, ideals, and modules
%
\opn\chara{char} 
\opn\length{\ell} 
\opn\pd{pd} 
\opn\rk{rk}
\opn\projdim{proj\,dim} 
\opn\injdim{inj\,dim} 
\opn\rank{rank}
\opn\depth{depth} 
\opn\grade{grade} 
\opn\height{height}
\opn\embdim{emb\,dim} 
\opn\codim{codim}

\opn\Tr{Tr} 
\opn\bigrank{big\,rank}
\opn\superheight{superheight}
\opn\lcm{lcm}
\opn\trdeg{tr\,deg}%\emph{
\opn\reg{reg} 
\opn\lreg{lreg} 
\opn\ini{in} 
\opn\lpd{lpd}
\opn\size{size}
\opn\mult{mult}
\opn\dist{dist}
\opn\cone{cone}
\opn\lex{lex}
\opn\rev{rev}
\opn\im{im}
\opn\m{m}
%------------------------------------------------
% Divisors
%
\opn\div{div} \opn\Div{Div} \opn\cl{cl} \opn\Cl{Cl}
%
%------------------------------------------------
% Subsets of the spectrum of a ring
%
\opn\Spec{Spec} \opn\Supp{Supp} \opn\supp{supp} \opn\Sing{Sing}
\opn\Ass{Ass} \opn\Min{Min}
%
%------------------------------------------------
% Standard operations on ideals and modules
%
\opn\Ann{Ann} \opn\Rad{Rad} \opn\Soc{Soc}
%
%------------------------------------------------
% Linear algebra and homology, endo- and automorphisms
%
\opn\Syz{Syz} \opn\Im{Im} \opn\Ker{Ker} \opn\Coker{Coker}
\opn\Am{Am} \opn\Hom{Hom} \opn\Tor{Tor} \opn\Ext{Ext}
\opn\End{End} \opn\Aut{Aut} \opn\id{id} \opn\ini{in}

\opn\nat{nat}
\opn\pff{pf}%   \pf exists already
\opn\Pf{Pf} \opn\GL{GL} \opn\SL{SL} \opn\mod{mod} \opn\ord{ord}
\opn\Gin{Gin}
\opn\Hilb{Hilb}\opn\adeg{adeg}\opn\std{std}\opn\ip{infpt}
\opn\Pol{Pol}
\opn\sat{sat}
\opn\Var{Var}
\opn\Gen{Gen}

%
%------------------------------------------------
% Convexity
%
\opn\aff{aff} \opn\con{conv} \opn\relint{relint} \opn\st{st}
\opn\lk{lk} \opn\cn{cn} \opn\core{core} \opn\vol{vol}
\opn\link{link} \opn\star{star}
%------------------------------------------------
% Graded rings and Rees algebras
\opn\gr{gr}

%Tonys commands

%
%------------------------------------------------
% Polynomials and power series
%

\def\pot#1#2{#1[\kern-0.28ex[#2]\kern-0.28ex]}

%
%------------------------------------------------
% Direct and inverse limits
%
\opn\dirlim{\underrightarrow{\lim}}
\opn\inivlim{\underleftarrow{\lim}}
%
%
% Names with a meaning
%

%
%------------------------------------------------
%

\def\Implies{\ifmmode\Longrightarrow \else
        \unskip${}\Longrightarrow{}$\ignorespaces\fi}
\def\implies{\ifmmode\Rightarrow \else
        \unskip${}\Rightarrow{}$\ignorespaces\fi}
\def\iff{\ifmmode\Longleftrightarrow \else
        \unskip${}\Longleftrightarrow{}$\ignorespaces\fi}

\let\:=\colon
\newtheorem{Theorem}{Theorem}[section]
\newtheorem{Lemma}[Theorem]{Lemma}
\newtheorem{Corollary}[Theorem]{Corollary}

\newtheorem{Remark}[Theorem]{Remark}

\newtheorem{Example}[Theorem]{Example}

\newtheorem{Definition}[Theorem]{Definition}

\newtheorem{Question}[Theorem]{Question}

\newtheorem{Notation}[Theorem]{Notation}
\newtheorem{Fact}{Fact}
\newtheorem{MainTheorem}{Main Theorem}

%
% We like the var forms of some greek letters (as taught in German schools)
%
\let\epsilon\varepsilon
\let\phi=\varphi
\let\kappa=\varkappa
%
%           We print on A4 paper
%
\textwidth=15cm \textheight=22cm \topmargin=0.5cm
\oddsidemargin=0.5cm \evensidemargin=0.5cm \pagestyle{plain}
%
%           The pf environment of AMSART needs a little help
%
\def\qed{\ifhmode\textqed\fi
      \ifmmode\ifinner\quad\qedsymbol\else\dispqed\fi\fi}
\def\textqed{\unskip\nobreak\penalty50
       \hskip2em\hbox{}\nobreak\hfil\qedsymbol
       \parfillskip=0pt \finalhyphendemerits=0}
\def\dispqed{\rlap{\qquad\qedsymbol}}

%
% ------    END OF GENERAL MACROS    -------
\opn\dis{dis}
\opn\height{height}
\opn\dist{dist}
\def\pnt{{\raise0.5mm\hbox{\large\bf.}}}

\opn\Lex{Lex}

%
%-- macro for local cohomology-----------------------------

%-- macro for a complicated condition for the extended
%-- Hochster's formula

%
%
%
\begin{document}

\title{The minimum number of vertices and edges of connected graphs with ${\rm ind}\mathchar`-{\rm match}(G) = p$, ${\rm min}\mathchar`-{\rm match}(G) = q$ and ${\rm match}(G) = r$}

%first author
\author{Kazunori Matsuda}
\address{Kazunori Matsuda,
Kitami Institute of Technology, 
Kitami, Hokkaido 090-8507, Japan}
\email{kaz-matsuda@mail.kitami-it.ac.jp}

%second author
\author{Ryosuke Sato}
\address{Ryosuke Sato,
Kitami Institute of Technology, 
Kitami, Hokkaido 090-8507, Japan}
\email{sasuke09.sa@gmail.com}

%third author
\author{Yuichi Yoshida}
\address{Yuichi Yoshida,
Kitami Institute of Technology, 
Kitami, Hokkaido 090-8507, Japan}
\email{yosuga.1214@gmail.com}

% \thanks{
% }
\subjclass[2020]{05C35,05C69, 05C70}
\keywords{induced matching number, minimum matching number, matching number}
\begin{abstract} 
Let ${\rm ind}\mathchar`-{\rm match}(G)$, ${\rm min}\mathchar`-{\rm match}(G)$ and ${\rm match}(G)$ denote the induced matching number, minimum matching number and matching number of a graph $G$, respectively. 
It is known that ${\rm ind}\mathchar`-{\rm match}(G) \leq {\rm min}\mathchar`-{\rm match}(G) \leq {\rm match}(G) \leq 2{\rm min}\mathchar`-{\rm match}(G)$ holds. 
In the present paper, we investigate the minimum number of vertices and edges of connencted simple graphs $G$ with ${\rm ind}\mathchar`-{\rm match}(G) = p$, ${\rm min}\mathchar`-{\rm match}(G) = q$ and ${\rm match}(G) = r$ for pair of integers $p, q, r$ such that $1 \leq p \leq q \leq r \leq 2q$. 
\end{abstract}

\maketitle

\section*{Introduction}  

In this paper, we assume that all graphs are finite and simple. 
Let us recall that a graph is simple if it is undirected graph containing no loops or multiple edges. 
We denote by $|X|$ the cardinality of a finite set $X$.

Let $G$ be a graph on the vertex set $V(G)$ with the edge set $E(G)$. 
We first recall the definitions of matching number, minimum matching number and induced matching number, defined by the notions of matching, maximal matching and induced matching, respectively.  

\begin{itemize} 
	\item A subset $M \subset E(G)$ is said to be a {\em matching} of $G$ if $e \cap f = \emptyset$ for all $e, f \in M$ with $e \neq f$.  
	\item A {\em maximal matching} of $G$ is a matching $M$ of $G$ for which $M \cup \{e\}$ is not a matching of $G$ for all $e \in E(G) \setminus M$. 
	\item A matching $M$ of $G$ is said to be an {\em induced matching} if, for all $e, f \in M$ with $e \neq f$, there is no edge $g \in E(G)$ with $e \cap g \neq \emptyset$ and $f \cap g \neq \emptyset$. 
	\item The {\em matching number} match$(G)$, the {\em minimum matching number} min-match$(G)$ and the {\em induced matching number} ind-match$(G)$ of $G$ are defined as follows respectively:
	\begin{eqnarray*}
	\text{match}(G)&=&\max\{|M| : M \text{\ is\ a\ matching\ of\ } G \}; \\
	\text{min-match}(G)&=&\min\{|M| : M \text{\ is\ a\ maximal\ matching\ of\ } G \}; \\
	\text{ind-match}(G)&=&\max\{|M| : M \text{\ is\ an\ induced\ matching\ of\ } G \}.  
	\end{eqnarray*}
\end{itemize}

We remark that it is pointed out that the minimum matching number of $G$ is equal to the edge domination number of $G$ in \cite[Chapter 10]{Harary1969}, see also \cite{Chaemchan2010,YannakakisGavril1980}. 

In general, the following inequalities 

\[
1 \leq {\rm ind}\mathchar`-{\rm match}(G) \leq {\rm min}\mathchar`-{\rm match}(G) \leq {\rm match}(G) \leq 2{\rm min}\mathchar`-{\rm match}(G)
\]
and 
\[
2{\rm match}(G) \leq |V(G)|
\]
hold for all graphs $G$ with $E(G) \neq \emptyset$ (see \cite{HHKT}). 
The equality $2{\rm match}(G) = |V(G)|$ holds if and only if $G$ has a perfect matching and 
some characterizations of graphs which have a perfect matching are known (see \cite{HassaniMonfared-Mallik2016},  \cite{Tutte1947}). 
\cite[Corollary 2]{Sumner} says that $2{\rm match}(G) = |V(G)|$ holds if $G$ is a connected $K_{1, 3}$-free graph such that $|V(G)|$ is even. 
In \cite[Theorem 2.1]{AV}, a characterization of 
connected graphs $G$ with $2{\rm min}\mathchar`-{\rm match}(G) = 2{\rm match}(G) = |V(G)|$ is given. 
In \cite{DK}, graphs $G$ with ${\rm min}\mathchar`-{\rm match}(G) = {\rm match}(G)$ or ${\rm match}(G) = 2{\rm min}\mathchar`-{\rm match}(G)$ are investigated. 
A characterization of connected graphs $G$ with  ${\rm ind}\mathchar`-{\rm match}(G) = {\rm min}\mathchar`-{\rm match}(G) = {\rm match}(G)$ is given (\cite[Theorem 1]{CW}, \cite[Remark 0.1]{HHKO}). 
These graphs are also of interest in the view of combinatorial commutative algebra (cf.  \cite{BVMVT2018,FaridiMadduweHewalage,FicarraMoradi,GLW2022,HerzogHibiMoradi2022,HHKO,HKKMVT,HKMT,HKMVT,HibiMoradi2024,SF-EJC,SF-CA,SF-JPAA,SF-JA,Trung2020}). 
In \cite{HHKT}, a characterization of connected graphs $G$ with ${\rm ind}\mathchar`-{\rm match}(G) = {\rm min}\mathchar`-{\rm match}(G)$ is given. 
By definition of induced matching, one has ${\rm ind}\mathchar`-{\rm match}(G) = 1$ if and only if $G$ is $2K_{2}$-free. 
Since threshold graphs, split graphs, domishold graphs and difference graphs are $2K_{2}$-free (\cite[Theorem 1]{ChvatalHammer1977}, \cite{FoldesHammer1977},  \cite{BenzakenHammer1978}, \cite[Proposition 2.6]{HammerPeledSun1990}), the induced matching number of these graphs are one.  

Our study comes from the following two facts. 

\begin{Fact}[\cite{HHKT}, see also \cite{HM,MY}]\normalfont
For all integers $p, q$ and $r$ with $1 \leq p \leq q \leq r \leq 2q$, there exists a connected graph $G = G(p, q, r)$ such that ${\rm ind}\mathchar`-{\rm match}(G) = p$, ${\rm min}\mathchar`-{\rm match}(G) = q$ and ${\rm match}(G) = r$. 
\end{Fact}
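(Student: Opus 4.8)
The plan is to exploit the additivity of all three invariants over connected components: for a disjoint union $G \sqcup H$ one has ${\rm ind}\mathchar`-{\rm match}(G \sqcup H) = {\rm ind}\mathchar`-{\rm match}(G) + {\rm ind}\mathchar`-{\rm match}(H)$, and likewise for ${\rm min}\mathchar`-{\rm match}$ and ${\rm match}$, since matchings, maximal matchings and induced matchings all split across components. So I would first realize the ``one block at a time'' invariant triples by connected $2K_2$-free gadgets, assemble $p$ of them, and finally glue everything into a single connected graph without disturbing the invariants. The first reduction is purely numerical: because $q \ge p$ and $q \le r \le 2q$, I can choose integers $q_1, \dots, q_p \ge 1$ with $\sum_i q_i = q$ (for instance $q_1 = q - p + 1$ and $q_2 = \dots = q_p = 1$) and then integers $r_i$ with $q_i \le r_i \le 2q_i$ and $\sum_i r_i = r$; the latter is possible because, starting from the all-$q_i$ configuration and increasing the $r_i$ one at a time, every integer of $[q,2q]$ arises as such a sum, and $r$ lies in this range.

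The building block I would use is $\Gamma(s,t) := K_{2s}$ together with a pendant vertex attached to exactly $2(t-s)$ of its vertices, which is defined whenever $1 \le s \le t \le 2s$ (so that $0 \le 2(t-s) \le 2s$). I claim ${\rm ind}\mathchar`-{\rm match}(\Gamma(s,t)) = 1$, ${\rm min}\mathchar`-{\rm match}(\Gamma(s,t)) = s$ and ${\rm match}(\Gamma(s,t)) = t$. The first holds because any two peripheral edges of $\Gamma(s,t)$ are joined by a clique edge, so $\Gamma(s,t)$ is $2K_2$-free. For ${\rm match}$, the graph has $2s + 2(t-s) = 2t$ vertices and admits a matching pairing each pendant with its clique vertex and pairing the remaining clique vertices, of size $t$; the vertex count forces this matching to be maximum. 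For ${\rm min}\mathchar`-{\rm match}$, a perfect matching of $K_{2s}$ is a maximal matching of size $s$, and a short counting argument (a maximal matching must match every pendant-bearing clique vertex and may leave at most one clique vertex unmatched) shows that no maximal matching is smaller.

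To make the disjoint union of the $\Gamma(q_i,r_i)$ connected I would add one new vertex $v^*$ and join it to a single clique vertex $c_i$ of each block. Writing $G$ for the resulting graph, the key structural observation is that every edge leaving a block passes through $v^*$, so no single edge of $G$ joins an internal edge of one block to an internal edge of another. This immediately gives ${\rm ind}\mathchar`-{\rm match}(G) = p$: any transversal choosing one edge per block is an induced matching, while conversely each block contributes at most one edge to an induced matching, and the unique possible edge at $v^*$ cannot be combined with an internal edge of its own block (a clique edge through $c_i$ would bridge them). For ${\rm match}(G)$, the vertex count $|V(G)| = 1 + \sum_i 2r_i = 2r + 1$ caps the matching number at $r$, a value attained by the union of the blocks' maximum matchings.

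The hard part will be the lower bound ${\rm min}\mathchar`-{\rm match}(G) \ge q$, since this is the only place where gluing could a priori create a smaller maximal matching. Given a maximal matching $M$ of $G$, I would split according to whether $v^*$ is matched. If $v^*$ is unmatched, maximality of $M$ forces $M \cap E(\Gamma(q_i,r_i))$ to be a maximal matching of each block, hence of size $\ge q_i$, summing to $q$. If $v^* c_k \in M$, the delicate case, the edges of $M$ inside every block $i \ne k$ still form a maximal matching there, while for block $k$ one argues that $M \cap E(\Gamma(q_k,r_k))$ together with the external edge $v^* c_k$ dominates all of $E(\Gamma(q_k,r_k))$, so augmenting by one clique edge at the free vertex $c_k$ yields a maximal matching of the block; this forces $|M \cap E(\Gamma(q_k,r_k))| \ge q_k - 1$, hence at least $q_k$ edges are charged to block $k$ once $v^* c_k$ is counted. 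Summing over blocks gives $|M| \ge q$, and combined with the explicit maximal matching of size $q$ above this yields ${\rm min}\mathchar`-{\rm match}(G) = q$. The main obstacle is precisely this charging argument for the apex edge; the other two invariants are controlled cheaply by the transversal observation and the vertex count.
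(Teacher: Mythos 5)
Your construction is correct, and it is essentially the strategy this paper's own machinery is built on, so the comparison needs one caveat: the paper never actually proves this Fact (it is quoted from \cite{HHKT}, see also \cite{HM,MY}), but its toolkit for the analogous constructions in Theorem \ref{4thMainThm} is exactly your apex-gluing. Theorem \ref{Thm-ind} shows that attaching $2K_2$-free blocks $H_1,\dots,H_s$ to a new vertex gives ${\rm ind}$-${\rm match}(G)=s$, and Theorem \ref{Thm-min} gives ${\rm min}$-${\rm match}(G)=\sum_i{\rm min}$-${\rm match}(H_i)$; moreover your block $\Gamma(s,t)$, i.e.\ $K_{2s}$ with $2(t-s)$ pendants, is precisely the graph $G^{(1)}_{s,t-s,0}$ of \cite[1.3]{MY} that the paper invokes in Theorem \ref{3rdMainThm}(3), and for $t=2s$ it is the whiskered graph $G_{2s}$ of Subsection \ref{1.5}. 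The genuine differences are two. First, the paper's blocks are complete bipartite graphs $K_{a,a}$ and fully whiskered complete graphs, combined arithmetically, whereas your single family $\Gamma(s,t)$ realizes every triple $(1,s,t)$ with $s\le t\le 2s$ at once --- a mild streamlining. Second, and more substantively, for the lower bound ${\rm min}$-${\rm match}(G)\ge q$ (your ``hard part'') the paper gets it for free from monotonicity of ${\rm min}$-${\rm match}$ under induced subgraphs (Lemma \ref{induced-subgraph}(2)) combined with additivity over components (Lemma \ref{disconnected}), and uses the condition $(*_2)$ only to certify the size-$q$ matching is maximal in $G$; you prove the lower bound by a direct charging argument instead. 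Your route is more self-contained (Lemma \ref{induced-subgraph}(2) is not as innocent as it looks), the paper's is shorter. Note also that your upper bound works because the union of the clique perfect matchings covers every attachment vertex $c_i$, hence dominates $v^*$ --- that is exactly the role $(*_2)$ plays in Theorem \ref{Thm-min}.

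One point in your charging argument needs a patch. When $\{v^*,c_k\}\in M$, you augment $M_k:=M\cap E\bigl(\Gamma(q_k,r_k)\bigr)$ ``by one clique edge at the free vertex $c_k$''; this is impossible if every neighbour of $c_k$ inside the block is already saturated by $M_k$. In that case, however, $M_k$ is itself already a maximal matching of the block: by maximality of $M$, every block edge meets $V(M_k)\cup\{c_k\}$, and a block edge through $c_k$ ends in a neighbour of $c_k$, hence in $V(M_k)$. So in that case $|M_k|\ge q_k$ and the charge to block $k$ is even better than claimed. With this case added, $|M|\ge q$ holds in all cases and your proof is complete.
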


\begin{Fact}\normalfont 
The existence of connected graphs $G$ with ${\rm ind}\mathchar`-{\rm match}(G) = p$, ${\rm min}\mathchar`-{\rm match}(G) = q$ and ${\rm match}(G) = r$ is not unique. 
In particular, the number of vertices and edges of such graphs can take various values. 
For example, let us consider the complete graph $K_{4}$ and the cycle graph $C_{5}$. 
Then we have
\[
{\rm ind}\mathchar`-{\rm match}(K_{4}) = {\rm ind}\mathchar`-{\rm match}(C_{5}) = 1, \ \ {\rm min}\mathchar`-{\rm match}(K_{4}) = {\rm min}\mathchar`-{\rm match}(C_{5}) = 2, 
\]
\[
{\rm match}(K_{4}) = {\rm match}(C_{5}) = 2 
\]
but $\left( |V(K_{4})|, |E(K_{4})| \right) = (4, 6) \ \text{and} \ \left( |V(C_{5})|, |E(C_{5})| \right) = (5, 5)$. 
\end{Fact}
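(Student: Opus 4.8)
The statement is purely a verification that the two explicit graphs $K_{4}$ and $C_{5}$ realize the same triple of invariants while having different numbers of vertices and edges, so the plan is to compute each of ${\rm ind}\mathchar`-{\rm match}$, ${\rm min}\mathchar`-{\rm match}$ and ${\rm match}$ directly from the definitions for both graphs and then read off the conclusion. The counts $(|V|,|E|) = (4,6)$ for $K_{4}$ and $(5,5)$ for $C_{5}$ are immediate, so all the content lies in the three matching invariants.

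I would start with the matching number, which is the easiest: since ${\rm match}(G) \leq \lfloor |V(G)|/2 \rfloor$ for every graph, both numbers are at most $2$, and exhibiting two disjoint edges such as $\{\{1,2\},\{3,4\}\}$ in each graph forces ${\rm match}(K_{4}) = {\rm match}(C_{5}) = 2$. Next I would compute the induced matching number. Because ${\rm ind}\mathchar`-{\rm match}(G) \leq {\rm match}(G) = 2$ in both cases, it suffices to rule out an induced matching of size $2$, that is, to check that each graph is $2K_{2}$-free. For $K_{4}$ this is automatic, since any two disjoint edges span four pairwise adjacent vertices and are therefore joined by a further edge; for $C_{5}$ a short case inspection shows that any two disjoint edges of the $5$-cycle are joined by one of the remaining edges. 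Hence ${\rm ind}\mathchar`-{\rm match}(K_{4}) = {\rm ind}\mathchar`-{\rm match}(C_{5}) = 1$.

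The one step that requires genuine care --- and which I would regard as the main obstacle, mild as it is --- is the minimum matching number, because here one must reason about \emph{all} maximal matchings rather than exhibit a single object. The key observation is that a single edge is never a maximal matching in either graph: deleting the two endpoints of an edge of $K_{4}$ leaves two still-adjacent vertices, and deleting the two endpoints of an edge of $C_{5}$ leaves a path on three vertices, which still contains an edge. Thus every maximal matching has at least two edges, while ${\rm match}(G) = 2$ bounds it from above; therefore ${\rm min}\mathchar`-{\rm match}(K_{4}) = {\rm min}\mathchar`-{\rm match}(C_{5}) = 2$. Collecting these computations yields the triple $(1,2,2)$ for both graphs together with the differing pairs $(4,6)$ and $(5,5)$, which is exactly the asserted non-uniqueness.
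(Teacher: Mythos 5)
Your proposal is correct: all three invariants are verified accurately for both $K_{4}$ and $C_{5}$, and the paper itself states this Fact without proof, leaving exactly this routine verification implicit. Your argument (bounding ${\rm match}$ by $\lfloor |V|/2\rfloor$, checking $2K_{2}$-freeness, and observing that no single edge is a maximal matching in either graph) is the natural filling-in of those details, so it matches the paper's intent.
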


Based on the above facts, we investigate $\min\left( p, q, r; |V| \right)$ and $\min\left( p, q, r; |E| \right)$, which are defined as follows: 

\begin{Definition}
For $p, q, r \in \mathbb{Z}$ with $1 \leq p \leq q \leq r \leq 2q$, we define 
\begin{eqnarray*}
\min\left( p, q, r; |V| \right) 
&=& \left\{ |V(G)| ~\left|~
\begin{array}{c}
  \mbox{$G$\rm{\ is\ a\ connected\ graph}\ \rm{with}\ $\text{\rm ind-match}(G) = p,$} \\
  \mbox{$\text{\rm min-match}(G) = q \ {\rm and} \ \text{\rm match}(G) = r$} \\ 
\end{array}
\right \}\right. ,
\end{eqnarray*}
\begin{eqnarray*}
\min\left( p, q, r; |E| \right) 
&=& \left\{ |E(G)| ~\left|~
\begin{array}{c}
  \mbox{$G$\rm{\ is\ a\ connected\ graph}\ \rm{with}\ $\text{\rm ind-match}(G) = p,$} \\
  \mbox{$\text{\rm min-match}(G) = q \ {\rm and} \ \text{\rm match}(G) = r$} \\ 
\end{array}
\right \}\right. .
\end{eqnarray*}
\end{Definition}

Now we describe our main theorems in the present paper. 

\begin{MainTheorem}[Theorem 2.1]
Let $p, q, r$ be integers with $1 \leq p \leq q \leq r \leq 2q$. 
Then
\begin{enumerate}
    \item[$(1)$] $\min\left( 1, q, r; |V| \right) = 2r$. 
    \item[$(2)$] $\min\left( p, q, r; |V| \right) = 2r$ \ if \ $2 \leq p \leq q < r \leq 2q$. 
    \item[$(3)$] $\min\left( p, r, r; |V| \right) = 2r + 1$ \ if \ $2 \leq p \leq q = r$. 
\end{enumerate}
\end{MainTheorem}

\begin{MainTheorem}[Theorem 3.1]
Let $p, q, r$ be integers with $1 \leq p \leq q \leq r \leq 2q$. Then
\begin{enumerate}
    \item[$(1)$] $\min\left( 1, q, 2q; |E| \right) = \binom{2q + 1}{2}$ and $\min\left( 1, q, 2q - 1; |E| \right) = \binom{2q}{2}$ for all $q \geq 1$. 
    \item[$(2)$] $\min\left( q, q, r; |E| \right) = 2r - 1$ \ if \ $2 \leq p = q < r \leq 2q$. 
    \item[$(3)$] $\min\left( r, r, r; |E| \right) = 2r$ \ if \ $2 \leq p = q = r$. 
\end{enumerate}
\end{MainTheorem}

\begin{MainTheorem}[Theorem 3.2]
Let $q, r$ be integers with $2 \leq q \leq r \leq 2q - 2$. Then
\begin{enumerate}
    \item[$(1)$] $\min\left( 1, q, q; |E| \right) \leq q^2$. 
    \item[$(2)$] $\min\left( 1, q, q + 1; |E| \right) \leq q^2 + 2$. 
    \item[$(3)$] $\min\left( 1, q, r; |E| \right) \leq \min\{ f_{1}(q, r), f_{2}(q, r) \}$ \ if \ $q + 2 \leq r \leq 2q - 2$, where 
    \[
    f_{1}(q, r) = r(q - 1) + \binom{r - q + 2}{2}, \ \ f_{2}(q, r) = 2(r - q) + \binom{2q}{2}. 
    \]
\end{enumerate}    
\end{MainTheorem}

\begin{MainTheorem}[Theorem 3.4]
Let $p, q, r$ be integers with $2 \leq p < q \leq r \leq 2q$. Then
\begin{enumerate}
    \item[$(1)$] $\min\left( p, q, q; |E| \right) \leq (a_{1}^{2} + 1)p + (2a_{1} + 1)b_{1}$, where $a_{1}$ and $b_{1}$ are non-negative integers such that $q = a_{1}p + b_{1}$ and $0 \leq b_{1} \leq p-1$.  
    \item[$(2)$] If $q < r \leq 2q - p + 1$, we have 
    \[
    \min\left( p, q, r; |E| \right) \leq a_{2}^{2}(p - 1) + (2a_{2} + 1)b_{2} + p + \binom{2(r - q) + 1}{2}, 
    \]
    where $a_{2}$ and $b_{2}$ are non-negative integers such that $2q - r = a_{2}(p - 1) + b_{2}$ and $0 \leq b_{2} \leq p-2$.
    \item[$(3)$] If $2q - p + 1 < r \leq 2q$, we have
    \[
    \min\left( p, q, r; |E| \right) \leq p + 2q - r + (p - 2q + r)\binom{2a_{3}+1}{2} + b_{3}(4a_{3} + 3) , 
    \]
    where $a_{3}$ and $b_{3}$ are non-negative integers such that $r - q = a_{3}(p - 2q + r) + b_{3}$ and $0 \leq b_{3} \leq p-2q + r - 1$.
\end{enumerate}    
\end{MainTheorem}

\begin{Notation}\normalfont 
We summarize our notations here.  
\begin{itemize}
%    \item We denote an edge connecting vertices $u$ and $v$ by $\{u, v\}$. 
%    \item We denote by $|X|$ the cardinality of a finite set $X$. 
    \item For a non-negative integer $m$, we define $X_{m} = \{ x_1, x_2, \ldots, x_{m} \}$. 
    Note that $X_{0} = \emptyset$. 
    In the same way, we also define $Y_{m}$, $Z_{m}$, $U_{m}$, $V_{m}$ and $W_{m}$. 
    \item For a matching $M \subset E(G)$ , we write $V(M) = \{v \in V(G) \mid v \in e \ \text{for some} \ e \in M\}$. 
    $M$ is said to be a {\em perfect matching} of $G$ if $V(M) = V(G)$. 
    \item For $v \in V(G)$, we write $N_{G}(v) = \{ w \in V(G) \mid \{v, w\} \in E(G) \}$, $N_{G}[v] = N_{G}(v) \cup \{v\}$ and $\deg(v) = |N_{G}(v)|$.  
%    We remark that $G$ has a perfect matching if and only if $|V(G)| = 2{\rm match}(G)$. 
\end{itemize}
\end{Notation}

%%%%%%%%%%%%%%%%%%%%%%%%%%%%%%%%%%%%%%%%
% Section 1
%%%%%%%%%%%%%%%%%%%%%%%%%%%%%%%%%%%%%%%%

\section{Preparation}

In this section, we prepare for proving our main theorems. 

\subsection{Induced subgraphs and disconnected graphs}
Let $G$ be a graph and let $W$ be a subset of $V(G)$. 
The {\em induced subgraph} of $G$ on $W$, denoted by $G[W]$, is defined by:  
\begin{itemize}
	\item $V(G[W]) = W$. 
	\item $E(G[W]) = \{ \{u, v\} \in E(G) \mid u, v \in W \}$. 
\end{itemize}

\begin{Lemma}\label{induced-subgraph}
Let $G$ be a graph. 
For $W \subset V(G)$, one has  
\begin{enumerate}
	\item[$(1)$] ${\rm ind}$-${\rm match}(G[W]) \leq {\rm ind}$-${\rm match}(G)$.
	\item[$(2)$] ${\rm min}$-${\rm match}(G[W]) \leq {\rm min}$-${\rm match}(G)$.  
	\item[$(3)$] ${\rm match}(G[W]) \leq {\rm match}(G)$.  
\end{enumerate}
\end{Lemma}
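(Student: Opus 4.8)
The plan is to handle the three inequalities separately, treating (1) and (3) by a direct restriction argument and reserving the real work for (2). For (3) I would observe that since $E(G[W]) \subseteq E(G)$, every matching $M$ of $G[W]$ is automatically a matching of $G$ (the disjointness condition $e \cap f = \emptyset$ is unaffected by passing to the larger graph); taking $M$ to be a maximum matching of $G[W]$ gives ${\rm match}(G[W]) = |M| \leq {\rm match}(G)$. For (1) the same inclusion shows such an $M$ is a matching of $G$, so it remains to check that an induced matching of $G[W]$ stays induced in $G$. Suppose $M$ is an induced matching of $G[W]$ and, for contradiction, that some $g \in E(G)$ meets two distinct edges $e, f \in M$. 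The endpoints of $e$ and $f$ all lie in $W$, so $g$ joins two vertices of $W$ and hence $g \in E(G[W])$; this contradicts $M$ being induced in $G[W]$. Thus $M$ is an induced matching of $G$ and (1) follows.

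The subtle case is (2), because a maximal matching of $G[W]$ need not be maximal in $G$ and a maximal matching of $G$ need not restrict to one of $G[W]$, so restriction alone does not suffice. Here I would start from a minimum maximal matching $M$ of $G$, so $|M| = \text{min-match}(G)$, and construct a maximal matching $M'$ of $G[W]$ with $|M'| \leq |M|$. Split $M = M_W \sqcup M_\partial$, where $M_W$ consists of the edges of $M$ with both endpoints in $W$ (these lie in $G[W]$) and $M_\partial$ of the edges with at least one endpoint outside $W$. Then greedily extend $M_W$ to a maximal matching $M'$ of $G[W]$; the new edges $M' \setminus M_W$ form a matching inside $G[U]$, where $U = W \setminus V(M_W)$ is the set of vertices of $W$ left uncovered by $M_W$.

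The heart of the argument is to bound $|M' \setminus M_W|$. I would partition $U = A \sqcup B$, where $A$ is the set of vertices of $W$ matched by $M$ to a vertex outside $W$, and $B$ is the set of vertices of $W$ left unmatched by $M$ altogether (these indeed cover $U$ disjointly, since a vertex of $W$ uncovered by $M_W$ is either matched by $M$ outside $W$ or not matched at all). Maximality of $M$ in $G$ forces $B$ to be an independent set of $G$: two adjacent vertices of $B$ would give an edge that could be added to $M$. Consequently every edge of $G[U]$ has an endpoint in $A$, so $A$ is a vertex cover of $G[U]$ and any matching of $G[U]$—in particular $M' \setminus M_W$—has at most $|A|$ edges. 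Since mapping each vertex of $A$ to its $M$-matching edge is an injection into $M_\partial$, we get $|A| \leq |M_\partial|$, and therefore
\[
|M'| = |M_W| + |M' \setminus M_W| \leq |M_W| + |A| \leq |M_W| + |M_\partial| = |M|.
\]
As $M'$ is a maximal matching of $G[W]$, this yields $\text{min-match}(G[W]) \leq |M'| \leq |M| = \text{min-match}(G)$, completing (2). The main obstacle throughout is precisely this control of the greedy extension: the independence of $B$ and the bound $|A| \leq |M_\partial|$ are exactly what prevent the restricted-and-extended matching from growing past $|M|$.
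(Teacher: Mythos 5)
Your proof is correct; note, however, that the paper states this lemma without any proof at all (it is treated as a standard fact), so there is no argument of the authors' to measure yours against. Parts (1) and (3) are, as you say, immediate from $E(G[W]) \subseteq E(G)$; in (1) it is worth making explicit that since $e \cap f = \emptyset$, an edge $g$ meeting both must have one endpoint in $e$ and the other in $f$, which is why both endpoints of $g$ lie in $W$. The genuine content is (2), and your argument is sound: the splitting $M = M_W \sqcup M_\partial$, the independence of the set $B$ of $M$-unmatched vertices (this is exactly the paper's Lemma \ref{max-matching} applied to $M$), the bound $|M' \setminus M_W| \leq |A|$, and the injection of $A$ into $M_\partial$ together give $|M'| \leq |M|$ for your extended maximal matching $M'$ of $G[W]$. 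A slightly shorter packaging of the same idea: every new edge $e \in M' \setminus M_W$ must, by maximality of $M$ in $G$, meet some edge of $M$, necessarily one in $M_\partial$ since $e$ avoids $V(M_W)$; and because each edge of $M_\partial$ has at most one endpoint in $W$ while the new edges are pairwise disjoint and contained in $W$, this assignment is injective, yielding $|M' \setminus M_W| \leq |M_\partial|$ directly and bypassing the partition $U = A \sqcup B$. Either route closes the argument.
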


\begin{Lemma}[cf. {\cite[Lemma 1.6]{MY}}]\label{disconnected}
Let $G$ be a disconnected graph and $H_{1}, \ldots, H_{s}$ $(s \geq 2)$ the connected components of $G$. 
Then  
\begin{enumerate}
	\item[$(1)$] $\displaystyle {\rm{ind}}\mathchar`-{\rm{match}}(G) = \sum_{i = 1}^{s} {\rm{ind}}\mathchar`-{\rm{match}}(H_{i})$. 
	\item[$(2)$] $\displaystyle {\rm{min}}\mathchar`-{\rm{match}}(G) = \sum_{i = 1}^{s} {\rm{min}}\mathchar`-{\rm{match}}(H_{i})$.
	\item[$(3)$] $\displaystyle {\rm{match}}(G) = \sum_{i = 1}^{s} {\rm{match}}(H_{i})$.
\end{enumerate}
\end{Lemma}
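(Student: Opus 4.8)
The plan is to exploit the single structural fact underlying all three identities: in a disconnected graph every edge lies in a unique connected component, and vertices of distinct components are never adjacent. Consequently, for any matching $M$ of $G$ the sets $M_i = M \cap E(H_i)$ partition $M$, each $M_i$ is a matching of $H_i$, and conversely, if $M_i$ is a matching of $H_i$ for every $i$ then $\bigcup_{i=1}^{s} M_i$ is a matching of $G$, since edges drawn from different components cannot share a vertex. I would first record this edge-partition correspondence and then derive each identity from the two-sided inequality it produces; since the correspondence iterates transparently, I would argue directly for general $s$ rather than reduce to the case $s = 2$.

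For (3) the correspondence gives the result almost immediately: decomposing a maximum matching $M$ of $G$ yields $\text{match}(G) = \sum_i |M_i| \le \sum_i \text{match}(H_i)$, while the union of maximum matchings of the $H_i$ is a matching of $G$ realizing $\sum_i \text{match}(H_i)$, giving the reverse inequality. For (1) the only additional point is that the induced condition must be checked in both directions. Restricting a maximum induced matching $M$ of $G$, any edge $g \in E(H_i)$ joining two edges of $M_i$ would be an edge of $G$ joining two edges of $M$, so each $M_i$ is induced in $H_i$; for the converse, if $e, f$ lie in the same $H_i$ the induced property is inherited from $M_i$, while if they lie in different components, any edge of $G$ meeting $e$ lies entirely in that component and hence cannot meet $f$, so the union is induced in $G$.

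The crux is (2), where the invariant is the \emph{minimum} cardinality of a \emph{maximal} matching, and maximality is a closure condition that does not obviously localize to the components. The key step I would isolate is the equivalence: a matching $M$ of $G$ is maximal if and only if each $M_i$ is a maximal matching of $H_i$. This holds because a candidate edge $e \in E(G) \setminus M$ lies in a single component $H_i$ and is automatically disjoint from every edge of $M_j$ for $j \ne i$; hence $e$ can be adjoined to $M$ exactly when it can be adjoined to $M_i$, so $M$ admits no enlargement iff no $M_i$ does. Granting this, the lower bound follows by decomposing a minimum maximal matching of $G$, where each $M_i$ is maximal and hence $|M_i| \ge \text{min-match}(H_i)$, and the upper bound follows by taking the union of minimum maximal matchings of the $H_i$, which is maximal in $G$ by the equivalence. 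I expect this maximality equivalence to be the only step requiring genuine care, as parts (1) and (3) are formal consequences of the edge-partition correspondence.
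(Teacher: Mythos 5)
Your proof is correct and complete. Note that the paper itself gives no proof of this lemma: it is stated with the citation ``cf.\ [MY, Lemma 1.6]'' and the verification is deferred to that reference, so there is no in-paper argument to compare yours against. Your write-up supplies exactly the standard argument one would expect there: the edge-partition correspondence disposes of (1) and (3), and you correctly identify the one point needing care in (2) --- that a matching $M$ of $G$ is maximal if and only if each restriction $M \cap E(H_i)$ is maximal in $H_i$, which holds because any candidate edge lies in a single component and is automatically disjoint from the edges of $M$ in all other components --- and both inequalities for ${\rm min}$-${\rm match}$ follow cleanly from that equivalence.
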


\subsection{Independent set and independence number}

Let $G$ be a graph. 
A subset $S \subset V(G)$ is said to be an {\em independent set} of $G$ if $\{v, w\} \not\in E(G)$ for all $v, w \in S$ with $v \neq w$. 
Note that the empty set $\emptyset$ and a singleton $\{v\} \subset V(G)$ are independent sets. 
The {\em independence number} of $G$, denoted by $\alpha(G)$, is defined by 
\[
\alpha(G) = \max\{ |S| : S \ \text{is\ an\ independent\ set\ of}\ G \}.
\]

\begin{Lemma}\label{max-matching}
Let $M \subset E(G)$ be a matching of a graph $G$. 
Then $M$ is a maximal matching of $G$ if and only if \ $V(G) \setminus V(M)$ is an independent set of $G$. 
\end{Lemma}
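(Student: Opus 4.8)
The plan is to prove both implications directly from the definitions, with the single bridging observation that an edge $e=\{v,w\}\in E(G)$ can be adjoined to $M$ to form a strictly larger matching if and only if both endpoints lie outside $V(M)$. Indeed, if $v,w\notin V(M)$ then $e$ meets no edge of $M$ (each edge of $M$ has both its vertices in $V(M)$), so $e\notin M$ and $M\cup\{e\}$ is a matching; conversely, if one of $v,w$ belongs to $V(M)$, then $e$ shares a vertex with some edge of $M$ and $M\cup\{e\}$ fails to be a matching. This equivalence reduces the whole statement to comparing ``the existence of an addable edge'' with ``the existence of an edge inside $V(G)\setminus V(M)$.''

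For the forward direction, I would argue by contraposition: assume $V(G)\setminus V(M)$ is \emph{not} independent, so there is an edge $e=\{v,w\}\in E(G)$ with $v,w\in V(G)\setminus V(M)$. By the observation above, $e\notin M$ and $M\cup\{e\}$ is a matching, so $M$ is not maximal. For the reverse direction I would likewise use contraposition: assume $M$ is \emph{not} maximal, so there exists $e=\{v,w\}\in E(G)\setminus M$ with $M\cup\{e\}$ a matching; then $e$ is disjoint from every edge of $M$, hence $v,w\notin V(M)$, exhibiting an edge inside $V(G)\setminus V(M)$ and showing this set is not independent. Combining the two contrapositives yields the equivalence.

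There is essentially no hard step here, since the proof is a direct unwinding of the definitions of maximal matching and independent set; the only point requiring a moment's care is the elementary verification that an edge with both endpoints unmatched is automatically disjoint from all of $M$ and therefore cannot already belong to $M$. I would state this small fact explicitly once and then invoke it in both directions, which keeps the write-up short and symmetric.
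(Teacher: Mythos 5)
Your proposal is correct and follows essentially the same argument as the paper: both directions are handled by contraposition (the paper phrases the forward direction as a proof by contradiction, which is the same reasoning), with the key point in each case being that an edge with both endpoints outside $V(M)$ can be adjoined to $M$, and conversely an addable edge must have both endpoints unmatched. Your explicit statement of the bridging observation is a minor stylistic difference, not a different route.
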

\begin{proof}
First, we assume that $M$ is a maximal matching. 
If $V(G) \setminus V(M)$ is not an independent set, there exist $v, w \not\in V(M)$ such that $\{v, w\} \in E(G)$. 
Then $M \cup \{v, w\}$ is a matching of $G$, but this is a contradiction. 
Hence $V(G) \setminus V(M)$ is an independent set. 

Next, assume that $M$ is not a maximal matching. 
Then there exists $\{v, w\} \in E(G) \setminus M$ such that $M \cup \{v, w\}$ is a matching of $G$. 
Since $v, w \in V(G) \setminus V(M)$ and $\{v, w\} \in E(G)$, $V(G) \setminus V(M)$ is not an independent set.  
\end{proof}

\begin{Lemma}\label{ind1-3}
Let $G$ be a connected graph with ${\rm ind}$-${\rm match}(G) \geq 2$. 
Let $I \subset E(G)$ be an induced matching of $G$ with $|I| \geq 2$. 
If $v \in V(I)$, then $V(G) \setminus N_{G}[v]$ is not an independent set.
\end{Lemma}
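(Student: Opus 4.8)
The plan is to exhibit a single edge of $G$ whose two endpoints both lie in $V(G) \setminus N_{G}[v]$; since by definition an independent set contains no edge, this at once shows that $V(G) \setminus N_{G}[v]$ fails to be independent. So the whole task reduces to locating two adjacent vertices that avoid the closed neighborhood of $v$.

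First I would use the hypothesis $v \in V(I)$ to fix an edge $e \in I$ with $v \in e$. Because $|I| \geq 2$, there is a second edge $f = \{a, b\} \in I$ with $f \neq e$. Since $I$ is in particular a matching, $e \cap f = \emptyset$, so $a \neq v$ and $b \neq v$; thus $a, b \in V(G) \setminus \{v\}$. The key step is then to rule out $a, b \in N_{G}(v)$, and this is exactly where the \emph{induced} matching hypothesis enters: if, say, $a \in N_{G}(v)$, then $g = \{v, a\} \in E(G)$ would satisfy $v \in g \cap e$ and $a \in g \cap f$, so the edge $g$ meets both $e$ and $f$, contradicting the definition of induced matching. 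The same argument applies to $b$. Hence $a, b \notin N_{G}(v)$, and combined with $a, b \neq v$ we obtain $a, b \in V(G) \setminus N_{G}[v]$.

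Finally, $\{a, b\} = f \in E(G)$ with $a \neq b$ is an edge having both endpoints in $V(G) \setminus N_{G}[v]$, so this set is not an independent set, as required. I do not anticipate a genuine obstacle here: the statement is a direct unwinding of the induced-matching condition, and the only point to record carefully is that the \emph{second} edge $f$, which exists precisely because $|I| \geq 2$, is what supplies the adjacent pair avoiding $N_{G}[v]$. The hypotheses that $G$ is connected and that ${\rm ind}$-${\rm match}(G) \geq 2$ are not used beyond guaranteeing the existence of such an induced matching $I$.
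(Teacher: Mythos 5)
Your proposal is correct and follows essentially the same argument as the paper: take the edge of $I$ containing $v$ and a second edge $\{x,y\} \in I$, observe that the induced matching condition forces $\{v,x\}, \{v,y\} \notin E(G)$, so both endpoints of the second edge lie in $V(G) \setminus N_{G}[v]$, which therefore contains an edge and is not independent. Your write-up merely makes explicit the intermediate steps (disjointness of the matching edges and the contradiction with the connecting edge $g$) that the paper leaves implicit.
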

\begin{proof}
Let $\{v, w\}, \{x, y\} \in I$. 
Then $\{v, x\}, \{v, y\} \not\in E(G)$ since $I$ is an induced matching of $G$.  
Hence $x, y \in V(G) \setminus N_{G}[v]$. 
Thus we have the desired conclusion. 
\end{proof}

\begin{Lemma}\label{bounds-for-indep}
Let $M$ be a maximal matching of a connected graph $G$. 
Let $\alpha(G)$ be the independence number of $G$. 
Then 
\begin{enumerate}
	\item[$(1)$] $\alpha(G) \geq |V(G)| - 2|M|$. 
%	\item[$(2)$] $\alpha(G) \leq |V(G)| - |M|$. 
    \item[$(2)$] $|V(G)| - \alpha(G) \leq 2{\rm min}$-${\rm match}(G)$. 
\end{enumerate}
\end{Lemma}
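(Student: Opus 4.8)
The plan is to obtain both bounds as immediate consequences of Lemma~\ref{max-matching}, which asserts that the set of vertices left uncovered by a maximal matching is independent.

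For part (1), I would first note that since the edges of $M$ are pairwise disjoint, the covered set $V(M)$ has exactly $2|M|$ elements, whence $|V(G) \setminus V(M)| = |V(G)| - 2|M|$. As $M$ is a maximal matching, Lemma~\ref{max-matching} guarantees that $V(G) \setminus V(M)$ is an independent set of $G$. Since $\alpha(G)$ is by definition the maximum cardinality of an independent set, it is at least the cardinality of this particular one, which gives $\alpha(G) \geq |V(G)| - 2|M|$.

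For part (2), I would specialize part (1) rather than reuse the matching $M$ fixed in the hypothesis: choose a maximal matching $M_{0}$ attaining the minimum, so that $|M_{0}| = {\rm min}$-${\rm match}(G)$. Applying the inequality from (1) to $M_{0}$ yields $\alpha(G) \geq |V(G)| - 2\,{\rm min}$-${\rm match}(G)$, and transposing terms gives $|V(G)| - \alpha(G) \leq 2\,{\rm min}$-${\rm match}(G)$.

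The argument is essentially bookkeeping, and I anticipate no genuine obstacle. The one point that requires slight care is conceptual rather than technical: part (2) is a statement about the graph invariant ${\rm min}$-${\rm match}(G)$, so its proof must invoke a \emph{minimum} maximal matching, whereas part (1) is valid for the arbitrary maximal matching $M$ of the hypothesis. Keeping these two roles of ``maximal matching'' distinct is all that is needed.
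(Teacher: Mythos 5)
Your proposal is correct and matches the paper's own proof essentially verbatim: part (1) follows from Lemma~\ref{max-matching} plus the count $|V(M)| = 2|M|$, and part (2) is obtained by applying (1) to a maximal matching attaining ${\rm min}$-${\rm match}(G)$, exactly as the paper does with its matching $M_{1}$. No gaps or differences to report.
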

\begin{proof}
%Let $M = \bigcup_{i = 1}^{r} \{ x_{i}, y_{i} \}$ be a maximal matching of $G$ with $|M| = r$. 
(1) Since $V(G) \setminus V(M)$ is an independent set of $G$ by Lemma \ref{max-matching}, we have 
\[
\alpha(G) \geq |V(G) \setminus V(M)| = |V(G)| - |V(M)| = |V(G)| - 2|M|.
\]

%(2) Put $S = \left( V(G) \setminus V(M) \right) \cup \{x_{1}, \ldots, x_{r}\}$. 
%Then $S$ is a maximal subset of $V(G)$ which is possible to be an independent set. 
%Hence one has 
%\[
%\alpha(G) \leq |S| = |\left( V(G) \setminus V(M) \right) \cup \{x_{1}, \ldots, x_{r}\}| = |V(G)| - |M|. 
%\]

(2) Let $M_{1}$ be a maximal matching of $G$ with $|M_{1}| = {\rm min}$-${\rm match}(G)$. 
Applying (1) for $M_{1}$, we have $|V(G)| - \alpha(G) \leq 2{\rm min}$-${\rm match}(G)$.   
%Next, let $M_{2}$ be a matching of $G$ with $|M_{2}| = \text{match}(G)$. 
%Note that the matching $M_{2}$ is maximal. 
%Applying (2) for $M_{2}$, one has ${\rm match}(G) \leq |V(G)| - \alpha(G)$. 
%Therefore we have the desired conclusion.  
\end{proof}

\subsection{The conditions $(*_1)$ and $(*_2)$}

In this subsection, we introduce two conditions $(*_1)$ and $(*_2)$ for vertices. 

\begin{Definition}\label{condition*}
Let $G$ be a connected graph. 
\begin{enumerate}
    \item[$(1)$] We say that $v \in V(G)$ satisfies the condition $(*_1)$ if there exists $w \in V(G)$ such that $\{v, w\} \in E(G)$ and $\deg(w) = 1$. 
    \item[$(2)$] We say that $v \in V(G)$ satisfies the condition $(*_2)$ if $v \in V(M)$ for all maximal matching $M$ with $|M| = {\rm min}$-${\rm match}(G)$. 
\end{enumerate}
\end{Definition}
Note that the condition $(*_1)$ means ``there exists $v \in V(G)$ such that $v$ is an incident to some leaf edge", and $(*_2)$ means ``there exists $v \in V(G)$ such that $v$ is contained in all minimum maximal matchings of $G$". 

\begin{Remark}\label{condition*_remark}
\begin{enumerate}
    \item[$(1)$] If $v \in V(G)$ satisfies $(*_1)$, then $v$ is contained in all maximal matching of $G$. 
In particular, $v$ satisfies $(*_2)$. 
    \item[$(2)$] Every vertex of the complete graph $K_{2n}$ satisfies $(*_2)$. Moreover, every vertex of the complete bipartite graph $K_{n, n}$ satisfies $(*_2)$. 
%    \item[$(3)$] Let $C_{n}$ be the cycle graph $(n \geq 3)$. Then there exists $v \in V(C_n)$ which satisfies $(*_2)$ if and only if $n = 4$. 
\end{enumerate}    
\end{Remark}

\begin{Lemma}\label{ind1-2}
Let $G$ be a connected graph with $\rm{ind}$-$\rm{match}(G) = 1$. 
Assume that there exists $v\in V(G)$ which satisfies the condition $(*_1)$. 
Then $V(G) \setminus N_{G}[v]$ is an independent set.
\end{Lemma}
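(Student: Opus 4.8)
The plan is to argue by contradiction and produce a forbidden induced matching of size two, thereby contradicting the hypothesis ${\rm ind}\text{-}{\rm match}(G) = 1$. This mirrors the reasoning of Lemma \ref{ind1-3}, except that here the single leaf edge supplied by $(*_1)$ furnishes one of the two edges of the induced matching, and the leaf property is exactly what guarantees there are no connecting edges.

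Concretely, I would first invoke $(*_1)$ to fix a vertex $w \in V(G)$ with $\{v,w\} \in E(G)$ and $\deg(w) = 1$; equivalently $N_G(w) = \{v\}$. Next, suppose for contradiction that $V(G) \setminus N_G[v]$ is \emph{not} an independent set. Then there are vertices $x, y \in V(G) \setminus N_G[v]$ with $\{x,y\} \in E(G)$. Since $x, y \notin N_G[v]$, neither equals $v$ and neither is adjacent to $v$, so $\{v,x\}, \{v,y\} \notin E(G)$. I would then check that $w \notin \{x,y\}$: if, say, $w = x$, then the edge $\{x,y\} = \{w,y\}$ would force $y \in N_G(w) = \{v\}$, contradicting $y \neq v$. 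Hence $\{v,w\}$ and $\{x,y\}$ are two disjoint edges of $G$.

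The decisive step is to verify that $\{\{v,w\}, \{x,y\}\}$ is an induced matching, i.e.\ that none of the four potential connecting edges $\{v,x\}, \{v,y\}, \{w,x\}, \{w,y\}$ lies in $E(G)$. The first two are excluded by $x, y \notin N_G[v]$, and the last two are excluded because $N_G(w) = \{v\}$ while $x, y \neq v$. Consequently $\{\{v,w\}, \{x,y\}\}$ is an induced matching of size $2$, giving ${\rm ind}\text{-}{\rm match}(G) \geq 2$, which contradicts the assumption ${\rm ind}\text{-}{\rm match}(G) = 1$. Therefore $V(G) \setminus N_G[v]$ is an independent set.

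I do not anticipate a serious obstacle: the only subtlety is the bookkeeping that ensures the two edges are genuinely disjoint and genuinely induce a $2K_2$, and both points rely entirely on the leaf condition $\deg(w) = 1$ together with $x, y \notin N_G[v]$. No calculation is required beyond these incidence checks.
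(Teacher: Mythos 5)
Your proof is correct and follows essentially the same argument as the paper: assume $V(G)\setminus N_G[v]$ is not independent, take an edge $\{x,y\}$ outside $N_G[v]$, and use the leaf condition $\deg(w)=1$ to conclude that $\{\{v,w\},\{x,y\}\}$ is an induced matching, contradicting ${\rm ind}\mathchar`-{\rm match}(G)=1$. Your version merely spells out the bookkeeping (that $w\notin\{x,y\}$ and that $\{w,x\},\{w,y\}\notin E(G)$) which the paper states without elaboration.
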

\begin{proof}
By assumption, there exists $w \in V(G)$ such that $\{v, w\} \in E(G)$ and $\deg(w) = 1$. 
Suppose that $V(G) \setminus N_{G}[v]$ is not an independent set. 
Then there exists $\{x, y\} \in E(G)$ with $x, y \not\in N_{G}[v]$. 
Since $\{x, v\}, \{y, v\}, \{x, w\}, \{y, w\} \not\in E(G)$, $\left\{ \{x, y\}, \{v, w\} \right\}$ is an induced matching of $G$, but this contradicts for ind-match$(G) = 1$.  
\end{proof}

Theorems \ref{Thm-ind} and \ref{Thm-min} are used for proving Theorem \ref{4thMainThm} in Section 3. 

\begin{Theorem}\label{Thm-ind}
Let $H_{1}, \ldots, H_{s}$ ($s \geq 2$) be connected graphs. 
Assume that for each $1 \leq i \leq s$, {\rm ind}-{\rm match}$(H_{i}) = 1$ and 
$H_{i}$ satisfies one of the following conditions:
\begin{enumerate}
    \item[$(a)$] There exists $v_{i} \in V(H_{i})$ which satisfies the condition $(*_{1})$. 
    \item[$(b)$] $H_{i}$ is a complete bipartite graph. 
\end{enumerate}
In the case that $H_{i}$ satisfies $(b)$, we take $v_{i} \in V(H_{i})$ arbitrary. 
Let $G$ be the graph with 
\[
V(G) = \left\{ \bigcup_{i = 1}^{s} V(H_{i}) \right\} \ \cup \ \{v\} \ \ \text{and} \ \ E(G) = \left\{ \bigcup_{i = 1}^{s} E(H_{i}) \right\} \ \cup \ \left\{ \{v_{i}, v\} \mid 1 \leq i \leq s \right\}, 
\]
where $v$ is a new vertex. 
Then {\rm ind}-{\rm match}$(G) = s$. 
\end{Theorem}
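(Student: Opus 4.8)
The plan is to prove ${\rm ind}$-${\rm match}(G) = s$ by establishing the two inequalities separately; the lower bound is routine, and the hub edges $\{v_i,v\}$ are the source of the only genuine difficulty in the upper bound.

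\emph{Lower bound.} Since ${\rm ind}$-${\rm match}(H_i) = 1$ forces $E(H_i) \neq \emptyset$, I would pick one edge $e_i \in E(H_i)$ for each $i$ and claim $\{e_1,\dots,e_s\}$ is an induced matching of $G$. It is a matching because the sets $V(H_i)$ are pairwise disjoint and $v \notin \bigcup_i V(e_i)$. It is induced because no edge of $G$ joins two distinct $V(H_i)$: every edge of $G$ is either internal to some $H_\ell$ or of the form $\{v_\ell, v\}$ with $v \notin \bigcup_\ell V(H_\ell)$. Hence for $i \neq j$ no single edge of $G$ can meet both $e_i$ and $e_j$, and so ${\rm ind}$-${\rm match}(G) \geq s$.

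\emph{Upper bound.} Let $I$ be an arbitrary induced matching of $G$ and set $I_i = I \cap E(H_i)$. Because $G[V(H_i)] = H_i$ (the only edges of $G$ leaving $V(H_i)$ end at $v \notin V(H_i)$) and being induced is inherited by restriction, each $I_i$ is an induced matching of $H_i$, so $|I_i| \le {\rm ind}$-${\rm match}(H_i) = 1$. Since $v$ has neighbours exactly $v_1,\dots,v_s$, every edge of $I$ not lying in some $E(H_i)$ has the form $\{v_k,v\}$, and as $I$ is a matching at most one such edge occurs. If none occurs, then $I = \bigcup_i I_i$ is a disjoint union and $|I| \le s$ immediately.

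The remaining, and main, case is when exactly one edge $\{v_k,v\}$ lies in $I$; here the naive count only gives $|I| \le 1 + s$, so the crux is to show $I_k = \emptyset$. Suppose instead $f = \{a,b\} \in I_k$. Since $I$ is a matching, $v_k \notin \{a,b\}$; and since $I$ is induced while any edge $\{v_k,c\} \in E(H_k)$ with $c \in \{a,b\}$ would meet both $f$ and $\{v_k,v\}$, we get $\{v_k,a\},\{v_k,b\} \notin E(H_k)$. Thus $a,b \in V(H_k) \setminus N_{H_k}[v_k]$. I would then invoke hypotheses $(a)$ or $(b)$ through the single statement that $V(H_k) \setminus N_{H_k}[v_k]$ is an independent set of $H_k$: in case $(a)$ this is exactly Lemma \ref{ind1-2}, while in case $(b)$, if $v_k$ lies in the part $A$ of the bipartition then $N_{H_k}[v_k] = \{v_k\} \cup B$, so the complement is $A \setminus \{v_k\}$, which is independent. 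Either way $\{a,b\}$ cannot be an edge, a contradiction, so $I_k = \emptyset$ and $|I| = 1 + \sum_{i \neq k} |I_i| \le 1 + (s-1) = s$. The main obstacle is precisely this last step: the structural hypotheses are exactly what is needed to forbid a component adjacent to $v$ from contributing simultaneously with its hub edge, and unifying cases $(a)$ and $(b)$ via the independence of $V(H_k) \setminus N_{H_k}[v_k]$ is the cleanest way to carry it out.
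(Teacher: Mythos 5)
Your proof is correct and follows essentially the same route as the paper: the lower bound comes from the disjoint union of one edge per $H_i$, and the upper bound hinges on the same key fact, namely that when $\{v_k,v\}\in I$ the independence of $V(H_k)\setminus N_{H_k}[v_k]$ (Lemma \ref{ind1-2} in case $(a)$, the bipartition argument in case $(b)$) forces $I\cap E(H_k)=\emptyset$. The only difference is presentational -- you count directly using $|I\cap E(H_i)|\leq 1$, where the paper argues by contradiction and pigeonhole -- and in fact you spell out the deduction $a,b\in V(H_k)\setminus N_{H_k}[v_k]$ that the paper leaves implicit.
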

\begin{proof}
First, we can see that ind-match$(G) \geq s$ by Lemma \ref{induced-subgraph} and \ref{disconnected} because $\bigcup_{i = 1}^{s} H_{i}$ is an induced subgraph of $G$. 
Hence it is enough to show ind-match$(G) \leq s$. 
Suppose that ind-match$(G) > s$. 
Then we can take an induced matching $I$ of $G$ with $|I| > s$. 
If $v \not\in V(I)$, then $I$ is also an induced matching of $\bigcup_{i = 1}^{s} H_{i}$, but this contradicts ind-match$\left( \bigcup_{i = 1}^{s} H_{i} \right) = s$. 
Thus we have $v \in V(I)$. 
We may assume that $\{v_{1}, v\} \in I$. 
Note that $v_{1} \in V(H_{1})$ and $H_{1}$ satisfies either $(a)$ or $(b)$. 
\begin{itemize}
    \item We consider the case that $H_{1}$ satisfies $(a)$. 
    Then $V(H_{1}) \setminus N_{H_{1}}[v_{1}]$ is an independent set from Lemma \ref{ind1-2}. This fact means $e \not\in I$ for all $e \in E(H_{1})$. 
    \item We consider the case that $H_{1}$ satisfies $(b)$. 
    Then $H_{1}$ is a complete bipartite graph with bipartition $V(H_{1}) = X \cup Y$. 
    We may assume $v_{1} \in X$. 
    Since $V(H_{1}) \setminus N_{H_{1}}[v_{1}] = X \setminus \{v_{1}\}$ is an independent set, it follows that $e \not\in I$ for all $e \in E(H_{1})$. 
\end{itemize}
In either case, $|I \cap E(H_{1})| = 0$ holds. 
Since $\{v_{1}, v\} \in I$, one has $\{v_{i}, v\} \not\in I$ for all $2 \leq i \leq s$. 
Hence $I = \{v_{1},v \} \ \cup \ \left[ \bigcup_{i = 1}^{s} \left\{ I \cap E(H_{i}) \right\} \right]$. 
Thus we have 
\[
s + 1 \leq |I| = 1 + \sum_{i = 1}^{s} |I \cap E(H_{i})| = 1 + \sum_{i = 2}^{s} |I \cap E(H_{i})|. 
\]
By pigeonhole principle, there exists $2 \leq j \leq s$ with $|I \cap E(H_{j})| \geq 2$. 
However, this is a contradiction because $I \cap E(H_{j})$ is an induced matching of $H_{j}$ but ind-match$(H_{j}) = 1$. 
Therefore we have ind-match$(G) = s$. 
\end{proof}

\begin{Theorem}\label{Thm-min}
Let $H_{1}, \ldots, H_{s}$ ($s \geq 2$) be connected graphs. 
We assume that for each $1 \leq i \leq s$, there exists $v_{i} \in V(H_{i})$ which satisfies the condition $(*_{2})$. 
%Let $G$ be the graph on the vertex set $V(G) = \left\{ \bigcup_{i = 1}^{s} V(H_{i}) \right\} \ \cup \ \{v\}$ with $E(G) = \left\{ \bigcup_{i = 1}^{s} E(H_{i}) \right\} \ \cup \ \left\{ \{v_{i}, v\} \mid 1 \leq i \leq s \right\}$, where $v$ is a new vertex. 
Let $G$ be the graph which appears in Theorem \ref{Thm-ind}. 
Then ${\rm min}$-${\rm match}(G) = \sum_{i = 1}^{s} {\rm min}$-${\rm match}(H_{i})$. 
\end{Theorem}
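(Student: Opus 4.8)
The plan is to prove the two inequalities $\mathrm{min}$-$\mathrm{match}(G) \le \sum_{i=1}^{s} \mathrm{min}$-$\mathrm{match}(H_i)$ and $\mathrm{min}$-$\mathrm{match}(G) \ge \sum_{i=1}^{s} \mathrm{min}$-$\mathrm{match}(H_i)$ separately; throughout I write $q_i = \mathrm{min}$-$\mathrm{match}(H_i)$. The whole argument rests on Lemma \ref{max-matching}, which reduces maximality of a matching to independence of the uncovered set, together with the structural observation that the only edges of $G$ joining two distinct blocks $V(H_i), V(H_j)$, or joining $\{v\}$ to a block, are the spoke edges $\{v_i, v\}$; since these all meet $v$, any matching of $G$ uses at most one of them.

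For the upper bound I would choose, for each $i$, a minimum maximal matching $M_i$ of $H_i$, so $|M_i| = q_i$, and set $M = \bigcup_{i=1}^{s} M_i$, a matching of $G$ of size $\sum_{i=1}^{s} q_i$. To see that $M$ is maximal I apply Lemma \ref{max-matching} to the uncovered set $V(G) \setminus V(M) = \{v\} \cup \bigcup_{i=1}^{s} (V(H_i) \setminus V(M_i))$. Each $V(H_i) \setminus V(M_i)$ is independent in $H_i$ because $M_i$ is maximal there, and there are no edges between distinct blocks, so the only possible offending edge is one at $v$. Here is where condition $(*_2)$ enters: the neighbours of $v$ are exactly $v_1, \dots, v_s$, and since $M_i$ is a minimum maximal matching and $v_i$ satisfies $(*_2)$, we have $v_i \in V(M_i) \subseteq V(M)$, so $v$ has no uncovered neighbour. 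Hence $V(G) \setminus V(M)$ is independent, $M$ is maximal, and $\mathrm{min}$-$\mathrm{match}(G) \le \sum_{i=1}^{s} q_i$.

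For the lower bound I take a minimum maximal matching $M$ of $G$ and decompose it as $M = M_0 \sqcup \bigsqcup_{i=1}^{s} M_i$, where $M_i = M \cap E(H_i)$ and $M_0 \subseteq \{\{v_i, v\} : i\}$ records the spoke edge used, so $|M_0| \le 1$. If a block $H_i$ has its vertex $v_i$ not matched to $v$, then no vertex of $H_i$ is matched outside $H_i$, so any edge of $H_i$ between two $M_i$-uncovered vertices would be an edge of $G$ between two $M$-uncovered vertices, contradicting maximality of $M$ via Lemma \ref{max-matching}; thus $M_i$ is maximal in $H_i$ and $|M_i| \ge q_i$. In particular, if $v$ is unmatched this applies to every block and gives $|M| = \sum_{i=1}^{s} |M_i| \ge \sum_{i=1}^{s} q_i$ at once.

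The main obstacle, and the only case needing care, is the block — say $H_1$ — whose vertex $v_1$ is matched to $v$ by the spoke edge in $M_0$. Here $M_1$ may fail to be maximal in $H_1$, precisely because $v_1$ is covered in $G$ but not by $M_1$. The key point I would establish is that every $M_1$-uncovered vertex of $H_1$ other than $v_1$ is genuinely $M$-uncovered in $G$, so that these vertices form an independent set by maximality of $M$; consequently every edge of $H_1$ inside the $M_1$-uncovered set is incident to $v_1$. It follows that $M_1$ can be extended to a maximal matching of $H_1$ by adjoining at most one edge at $v_1$, whence $q_1 \le |M_1| + 1$, that is $|M_1| \ge q_1 - 1$. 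Combining with $|M_0| = 1$ and with $|M_i| \ge q_i$ for $i \ge 2$ from the previous paragraph gives $|M| = 1 + |M_1| + \sum_{i \ge 2} |M_i| \ge \sum_{i=1}^{s} q_i$, which finishes the lower bound and hence the theorem. Note that $(*_2)$ is used only in the upper bound; the lower bound needs nothing beyond maximality and Lemma \ref{max-matching}.
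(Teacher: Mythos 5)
Your proof is correct, and it splits into one half that coincides with the paper and one half that is genuinely different. The upper bound is exactly the paper's argument: take minimum maximal matchings $M_{i}$ of the $H_{i}$, use $(*_{2})$ to guarantee $v_{i} \in V(M_{i})$ so that $v$ has no uncovered neighbour, and conclude via Lemma \ref{max-matching} that $\bigcup_{i} M_{i}$ is maximal in $G$. For the lower bound, however, the paper argues in two lines: $\bigcup_{i=1}^{s} H_{i}$ is an induced subgraph of $G$, so Lemma \ref{induced-subgraph}(2) (monotonicity of ${\rm min}$-${\rm match}$ under induced subgraphs) combined with Lemma \ref{disconnected}(2) (additivity over components) gives ${\rm min}$-${\rm match}(G) \geq \sum_{i} {\rm min}$-${\rm match}(H_{i})$. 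You instead analyze a minimum maximal matching $M$ of $G$ directly: blocks whose spoke edge is not in $M$ inherit a maximal matching of $H_{i}$, hence contribute at least $q_{i}$, while the single block whose spoke lies in $M$ contributes at least $q_{1}-1$, because its restriction $M_{1}$ becomes maximal in $H_{1}$ after adjoining at most one edge at $v_{1}$ (every edge inside the $M_{1}$-uncovered set must pass through $v_{1}$, by maximality of $M$ in $G$); the spoke edge itself then makes up the deficit. Both arguments are sound. What your route buys is self-containedness: it uses nothing beyond Lemma \ref{max-matching}, whereas the paper leans on Lemma \ref{induced-subgraph}(2), which is stated without proof and is not a triviality for ${\rm min}$-${\rm match}$ --- it fails for arbitrary (non-induced) subgraphs and requires an exchange argument even in the induced case. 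What the paper's route buys is brevity and reuse of general-purpose lemmas. Your closing observation that $(*_{2})$ is needed only for the upper bound is accurate, and it holds for the paper's proof as well.
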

\begin{proof}
For each $1 \leq i \leq s$, let $M_{i}$ be a maximal matching of $H_{i}$ with $|M_{i}| = {\rm min}$-${\rm match}(H_{i})$. 
Then $\bigcup_{i = 1}^{s} M_{i}$ is a maximal matching of the disconnected graph $\bigcup_{i = 1}^{s} H_{i}$. 
Since $\bigcup_{i = 1}^{s} H_{i}$ is an induced subgraph of $G$ on $\bigcup_{i = 1}^{s} V(H_{i})$, it follows that 
\[
{\rm min}\mathchar`-{\rm match}(G) \geq {\rm min}\mathchar`-{\rm match}\left( \bigcup_{i = 1}^{s} H_{i} \right) = \sum_{i = 1}^{s} {\rm min}\mathchar`-{\rm match}(H_{i})
\]
from Lemma \ref{induced-subgraph} and \ref{disconnected}. 
Moreover, one has $v_{i} \in V(M_{i})$ for all $1 \leq i \leq s$ by assumption.  
Hence 
\[
V(G) \setminus V\left( \bigcup_{i = 1}^{s} M_{i} \right) \ = \ V(G) \setminus \left\{ \bigcup_{i = 1}^{s} V(M_{i}) \right\} \ = \ \left[ \bigcup_{i = 1}^{s} \left\{ V(G_{i}) \setminus V(M_{i}) \right\} \right] \ \cup \ \{v\} 
\]
is an independent set of $G$. 
Thus $\bigcup_{i = 1}^{s} M_{i}$ is a maximal matching of $G$ by Lemma \ref{max-matching}. 
Therefore we have 
\[{\rm min}\mathchar`-{\rm match}(G) \leq \left| \bigcup_{i = 1}^{s} M_{i} \right| = \sum_{i = 1}^{s} {\rm min}\mathchar`-{\rm match}(H_{i}).
\]
Hence it follows that min-match$(G) = \sum_{i = 1}^{s}$ min-match$(H_{i})$. 
\end{proof}

\subsection{Lower bounds for $\min\left( p, q, r; |E| \right)$} 

In this subsection, we give some lower bounds for $\min\left( p, q, r; |E| \right)$. 

\begin{Lemma}\label{ind1-1}
Let $G$ be a connected graph with $\rm{ind}$-${\rm match}(G) = 1$. 
Then one has $|E(G)| \geq \binom{{\rm match}(G) + 1}{2}$. 
In particular, $\min\left( 1, q, r; |E| \right) \geq \binom{r + 1}{2}$. 
\end{Lemma}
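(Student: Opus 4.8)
The plan is to fix a maximum matching of $G$ and exploit the $2K_2$-freeness that ind-match$(G)=1$ encodes, in order to force a controlled number of additional ``connecting'' edges among the matching edges. Set $r = {\rm match}(G)$ and let $M = \{e_1,\dots,e_r\}$ be a matching of size $r$, writing $e_i = \{a_i, b_i\}$. Since $M$ is a matching, the set $V(M)$ consists of $2r$ distinct vertices, and every vertex of $V(M)$ belongs to exactly one $e_i$. This last observation is what will let me turn a counting estimate into an exact binomial coefficient.

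Next I would use the hypothesis directly. Because ind-match$(G)=1$, no two distinct edges form an induced matching; applied to the disjoint pair $e_i, e_j$ (with $i < j$), this produces an edge $g_{ij} \in E(G)$ with $e_i \cap g_{ij} \neq \emptyset$ and $e_j \cap g_{ij} \neq \emptyset$. As $e_i \cap e_j = \emptyset$, the edge $g_{ij}$ has exactly one endpoint in $e_i$ and one in $e_j$, so both its endpoints lie in $V(M)$. The $r$ edges of $M$ together with the $\binom{r}{2}$ edges $g_{ij}$ then all lie in $E(G)$, and the arithmetic
\[
r + \binom{r}{2} = \binom{r+1}{2}
\]
is what gives the claimed bound once distinctness is verified. (For $r=1$ there are no pairs and the bound $|E(G)|\ge 1$ is immediate.)

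The step requiring the most care, and the only genuine obstacle, is checking that these $r + \binom{r}{2}$ edges are pairwise distinct. Each matching edge $e_k$ has both endpoints inside a single $e_k$, whereas each $g_{ij}$ straddles two \emph{distinct} matching edges, so no $g_{ij}$ coincides with any $e_k$. For distinctness among the $g_{ij}$ I would use the key observation above: since each vertex of $V(M)$ lies in a unique matching edge, the edge $g_{ij}$ determines the unordered pair $\{i,j\}$ as the indices of the matching edges containing its two endpoints, so $\{i,j\} \mapsto g_{ij}$ is injective. Combining these facts yields $|E(G)| \geq \binom{r+1}{2} = \binom{{\rm match}(G)+1}{2}$. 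Finally, for the ``in particular'' statement, any connected graph $G$ contributing to $\min(1,q,r;|E|)$ satisfies ${\rm match}(G)=r$, so the inequality just proved gives $|E(G)| \geq \binom{r+1}{2}$ for every such $G$, and hence $\min(1,q,r;|E|) \geq \binom{r+1}{2}$.
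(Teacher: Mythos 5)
Your proof is correct and follows essentially the same approach as the paper: fix a maximum matching $\{e_1,\dots,e_r\}$, use ${\rm ind}$-${\rm match}(G)=1$ to produce a connecting edge $g_{ij}$ for each pair $i<j$, and count $r+\binom{r}{2}=\binom{r+1}{2}$ distinct edges. The only difference is that you spell out the distinctness verification (that each $g_{ij}$ determines $\{i,j\}$ and differs from every $e_k$), which the paper asserts without proof.
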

\begin{proof}
Let match$(G) = r$ and let $\{e_{1}, \ldots, e_{r}\}$ be a matching of $G$. 
Since ind-match$(G) = 1$, for all $1 \leq i < j \leq r$, there exists $f_{ij} \in E(G)$ such that $e_{i} \cap f_{ij} \neq \emptyset$ and $e_{j} \cap f_{ij} \neq \emptyset$. 
Note that $f_{ij} \neq f_{k\ell}$ if $(i, j) \neq (k, \ell)$. 
Hence one has $|E(G)| \geq r + \binom{r}{2} = \binom{r + 1}{2}$. 
\end{proof}

\begin{Lemma}\label{minE-minV}
Let $p, q, r$ be integers with $1 \leq p \leq q \leq r \leq 2q$. 
Then $\min\left( p, q, r; |E| \right) \geq \min\left( p, q, r; |V| \right) - 1$. 
\end{Lemma}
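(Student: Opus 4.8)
The plan is to exploit the elementary fact that a connected graph on $n$ vertices has at least $n - 1$ edges, since every connected graph contains a spanning tree and a tree on $n$ vertices has exactly $n - 1$ edges. The whole statement then reduces to comparing an edge-minimizer against the vertex-minimization problem.

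First I would fix a connected graph $G$ that realizes the minimum number of edges: that is, a connected graph with ${\rm ind}$-${\rm match}(G) = p$, ${\rm min}$-${\rm match}(G) = q$ and ${\rm match}(G) = r$ for which $|E(G)| = \min\left( p, q, r; |E| \right)$. Such a graph exists because the admissible collection is nonempty for every triple with $1 \leq p \leq q \leq r \leq 2q$, by the first Fact recalled in the Introduction. Since $G$ is connected, the spanning-tree observation gives $|E(G)| \geq |V(G)| - 1$. On the other hand, $G$ is itself an admissible graph for the vertex-minimization problem with the same parameters $p, q, r$, so $|V(G)| \geq \min\left( p, q, r; |V| \right)$. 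Chaining these two inequalities yields
\[
\min\left( p, q, r; |E| \right) = |E(G)| \geq |V(G)| - 1 \geq \min\left( p, q, r; |V| \right) - 1,
\]
which is exactly the asserted bound.

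There is essentially no obstacle in this argument. The only point I would flag for honesty is that the edge-minimizer $G$ need not simultaneously minimize the number of vertices, so one should not expect equality in general; but this causes no difficulty, since the proof only uses the single inequality $|V(G)| \geq \min\left( p, q, r; |V| \right)$ and never the reverse. Consequently the lemma follows at once, and it will serve to transfer the vertex lower bounds from the first Main Theorem into edge lower bounds.
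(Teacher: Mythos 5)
Your proof is correct and follows essentially the same route as the paper: both arguments rest on the spanning-tree bound $|E(G)| \geq |V(G)| - 1$ for connected graphs combined with the definition of $\min\left( p, q, r; |V| \right)$ as a lower bound on $|V(G)|$ for any admissible graph. Your added remark about the existence of an edge-minimizer (via the nonemptiness guaranteed by the Fact in the Introduction) is a minor point of extra care that the paper leaves implicit.
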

\begin{proof}
Let $\min\left( p, q, r; |V| \right) = s$. 
Then $|V(G)| \geq s$ holds for all connected graph $G$ with ind-match$(G) = p$, min-match$(G) = q$ and match$(G) = r$. 
Since $|E(G)| \geq |V(G)| - 1$, one has 
$\min\left( p, q, r; |E| \right) \geq s - 1 = \min\left( p, q, r; |V| \right) - 1.$
\end{proof}

\subsection{The graph $G_{r}$}\label{1.5} 
Let $r \geq 2$ be an integer. 
We define the graph $G_{r}$ as follows: 
\begin{itemize}
    \item $V(G_{r}) = X_{r} \cup Y_{r}$. 
    \item $E(G_{r}) = \left\{ \{x_{i}, x_{j}\} \mid 1 \leq i < j \leq r \right\} \cup \left\{ \{x_{k}, y_{k}\} \mid 1 \leq k \leq r \right\}$. 
\end{itemize}
Note that $|V(G_{r})| = 2r$ and $|E(G_{r})| = \binom{r}{2} + r = \binom{r + 1}{2}$.  

\begin{Lemma}\label{G_r}
Let $G_{r}$ be the graph as above. 
Then ${\rm ind}$-${\rm match}(G_{r}) = 1$, ${\rm min}$-${\rm match}(G_{r}) = \lceil r/2 \rceil$ and ${\rm match}(G_{r}) = r$. 
\end{Lemma}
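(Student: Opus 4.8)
The plan is to verify the three invariants of $G_{r}$ separately, working directly from the definitions and using the structure of $G_{r}$ as a clique $X_{r}$ on $\{x_1,\dots,x_r\}$ together with pendant leaves $y_k$ attached to each $x_k$.

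\medskip
\noindent\textbf{The matching number.}
First I would compute ${\rm match}(G_{r}) = r$. The set $\{\{x_k, y_k\} \mid 1 \leq k \leq r\}$ is a matching of size $r$ (these edges are pairwise disjoint), and since $V(M) = V(G_r)$ this is in fact a perfect matching; as $|V(G_r)| = 2r$, the bound $2{\rm match}(G_r) \leq |V(G_r)|$ forces ${\rm match}(G_r) = r$. This is the easiest of the three.

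\medskip
\noindent\textbf{The induced matching number.}
Next I would show ${\rm ind}\text{-}{\rm match}(G_{r}) = 1$. Since $E(G_r) \neq \emptyset$, the general inequality gives ${\rm ind}\text{-}{\rm match}(G_r) \geq 1$, so it suffices to prove $G_r$ contains no induced matching of size $2$. Take any two disjoint edges $e, f$. The key observation is that every edge of $G_r$ meets the clique $X_r$ in at least one vertex: a clique edge $\{x_i,x_j\}$ meets it in both endpoints, and a pendant edge $\{x_k,y_k\}$ meets it in $x_k$. So I can pick $x_i \in e \cap X_r$ and $x_j \in f \cap X_r$ with $i \neq j$ (they are distinct since $e,f$ are disjoint), and then the clique edge $g = \{x_i, x_j\} \in E(G_r)$ satisfies $e \cap g \neq \emptyset$ and $f \cap g \neq \emptyset$. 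Hence $\{e,f\}$ is never an induced matching, giving ${\rm ind}\text{-}{\rm match}(G_r) = 1$.

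\medskip
\noindent\textbf{The minimum matching number.}
The main obstacle will be the middle invariant, ${\rm min}\text{-}{\rm match}(G_r) = \lceil r/2 \rceil$. By Lemma~\ref{max-matching}, a matching $M$ is maximal iff $V(G_r) \setminus V(M)$ is independent, so I want the smallest matching whose complement of covered vertices is independent. The independent sets of $G_r$ are easy to describe: any independent set contains at most one vertex from the clique $X_r$ (and may contain arbitrarily many of the $y_k$). For the lower bound, suppose $M$ is a maximal matching; then $V(G_r)\setminus V(M)$ is independent, so it contains at most one $x_k$, meaning $M$ covers at least $r-1$ of the clique vertices. Each edge of $M$ covers at most two vertices of $X_r$, so $|M| \geq (r-1)/2$, and since $|M|$ is an integer this gives $|M| \geq \lceil (r-1)/2 \rceil$; a small parity refinement (an edge covering a single $x_k$ via a pendant wastes a slot) upgrades this to $|M| \geq \lceil r/2 \rceil$. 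For the upper bound I would exhibit an explicit maximal matching of size $\lceil r/2 \rceil$ using clique edges $\{x_1,x_2\}, \{x_3,x_4\}, \dots$ to pair up the $x_k$ two at a time (adding one pendant edge $\{x_r, y_r\}$ when $r$ is odd to cover the last clique vertex); after covering all of $X_r$, the uncovered vertices are among the leaves $y_k$, which form an independent set, so the matching is maximal by Lemma~\ref{max-matching}. Balancing the careful parity bookkeeping in the lower bound against the clean construction in the upper bound is where the real work lies; once both $\lceil r/2 \rceil$ bounds match, the lemma is proved.
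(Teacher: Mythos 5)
Your computations of ${\rm match}(G_{r})$ and ${\rm ind}$-${\rm match}(G_{r})$ are correct, and in fact your induced-matching argument (every edge of $G_{r}$ meets the clique $X_{r}$, so any two disjoint edges $e,f$ are joined by the clique edge $\{x_i,x_j\}$ with $x_i \in e \cap X_r$, $x_j \in f \cap X_r$) is more self-contained than the paper's, which just cites that $G_r$ is a split graph and split graphs are $2K_2$-free. Your upper bound for ${\rm min}$-${\rm match}$ is essentially the paper's explicit maximal matching. For the lower bound the paper takes a slightly different route: it computes $\alpha(G_r)=r$ and invokes Lemma \ref{bounds-for-indep}(2) to get $r = |V(G_r)|-\alpha(G_r) \leq 2\,{\rm min}$-${\rm match}(G_r)$, which yields $\lceil r/2\rceil$ at once without any parity discussion.

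The one genuine soft spot in your proposal is the lower bound when $r$ is odd. Your counting gives only $|M| \geq \lceil (r-1)/2 \rceil$, which for odd $r$ equals $(r-1)/2$, one short of $\lceil r/2 \rceil$, and the parenthetical ``parity refinement'' does not close the gap as stated: writing $a$ for the number of clique edges and $b$ for the number of pendant edges in a maximal matching $M$, the constraint that $M$ covers at least $r-1$ clique vertices gives $2a+b \geq r-1$, hence $|M| = a+b \geq (r-1+b)/2$. If $b \geq 1$ this is your ``wasted slot'' observation and does give $|M| \geq \lceil r/2\rceil$; but if $b=0$ and $a=(r-1)/2$, no slot is wasted and your bound is tight at $(r-1)/2$. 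Ruling out this configuration requires using maximality a second time: if $x_k$ is the uncovered clique vertex, then $y_k$ is also uncovered (its only neighbor is $x_k$, and no pendant edge lies in $M$), so $\{x_k,y_k\}$ is an edge with both endpoints in $V(G_r)\setminus V(M)$, contradicting Lemma \ref{max-matching}. Once you notice this, the whole case analysis collapses: in \emph{any} maximal matching every $x_k$ must be covered, since an uncovered $x_k$ forces an uncovered $y_k$ adjacent to it; hence $2|M| = |V(M)| \geq |X_r| = r$ and $|M| \geq \lceil r/2 \rceil$ directly, with no parity bookkeeping at all.
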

\begin{proof} 
Since $G_{r}$ is a split graph, 
$G_{r}$ is $2K_{2}$-free by \cite{FoldesHammer1977}.   
Hence one has ${\rm ind}$-${\rm match}(G_{r}) = 1$. 
Moreover, we can see that ${\rm match}(G_{r}) = r$ since $\left\{ \{x_{k}, y_{k}\} \mid 1 \leq k \leq r \right\}$ is a perfect matching of $G_{r}$. 

Now we prove ${\rm min}$-${\rm match}(G_{r}) = \lceil r/2 \rceil$. 
Since $|V(G_{r})| = 2r$ and $\alpha(G_{r}) = r$, one has $r \leq 2{\rm min}$-${\rm match}(G_{r})$ by Lemma \ref{bounds-for-indep}(2). 
Hence ${\rm min}$-${\rm match}(G_{r}) \geq \lceil r/2 \rceil$ holds. 
If $r$ is even, then $\left\{ \{x_{i}, x_{r/2 + i} \} \mid 1 \leq i \leq r/2 \right\}$ is a maximal matching of $G_{r}$. 
If $r$ is odd, then $\{x_{1}, y_{1}\} \cup \left\{ \{x_{i}, x_{\frac{r - 1}{2} + i} \} \mid 2 \leq i \leq \frac{r + 1}{2} \right\}$ is a maximal matching of $G_{r}$. 
In any case, it follows that ${\rm min}$-${\rm match}(G_{r}) \leq \lceil r/2 \rceil$. 
Thus we have ${\rm min}$-${\rm match}(G_{r}) = \lceil r/2 \rceil$. 
\end{proof}
   
\begin{Remark}
For a graph $G$ with vertex set $V(G) = \{v_{1}, \ldots, v_{m}\}$, the whiskered graph $W(G)$ of $G$ is the graph on the vertex set $V(W(G)) = V(G) \cup \{w_{1}, \ldots w_{m}\}$ and edge set $E(W(G)) = E(G) \cup \left\{ \{v_{i}, w_{i} \} \mid 1 \leq i \leq m \right\}$. 
Note that the graph $G_{r}$ as above is the whiskered graph of $K_{r}$. 
It follows that $\alpha(W(G)) = {\rm match}(W(G)) = |V(G)|$ for all graph $G$. 
Moreover, ${\rm match}(W(G)) = 2{\rm min}\mathchar`-{\rm match}(W(G))$ holds if $G$ has a perfect matching. 
It is known that the whiskered graph also has some good properties from the perspective of combinatorial commutative algebra, see \cite{CFHNVT2024,DochtermannEngstrom2009,FranciscoHa2008,HollebenNicklasson2025,Villarreal1990,Woodroofe2009}. 

\end{Remark}

\section{$\min\left( p, q, r; |V| \right)$}

In this section, we determine $\min\left( p, q, r; |V| \right)$ for all integers $p, q, r$ with $1 \leq p \leq q \leq r \leq 2q$. 

\begin{Theorem}\label{1stMainThm}
Let $p, q, r$ be integers with $1 \leq p \leq q \leq r \leq 2q$. 
Then 
\begin{enumerate}
    \item[$(1)$] $\min\left( 1, q, r; |V| \right) = 2r$. 
    \item[$(2)$] $\min\left( p, q, r; |V| \right) = 2r$ \ if \ $2 \leq p \leq q < r \leq 2q$. 
    \item[$(3)$] $\min\left( p, r, r; |V| \right) = 2r + 1$ \ if \ $2 \leq p \leq q = r$. 
\end{enumerate}
\end{Theorem}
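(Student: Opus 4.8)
The plan is to establish all three lower bounds first and then supply matching constructions, handling (1) and (2) by a single graph and (3) by a separate one.

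\emph{Lower bounds.} For (1) and (2) the inequality $2\,{\rm match}(G)\le |V(G)|$ quoted in the introduction gives $|V(G)|\ge 2r$ for every admissible $G$, so $\min(p,q,r;|V|)\ge 2r$. For (3) I would show $|V(G)|=2r$ is impossible once $p\ge 2$: then ${\rm match}(G)=r=|V|/2$ forces a perfect matching, and ${\rm min}\mathchar`-{\rm match}(G)={\rm match}(G)$ gives $2\,{\rm min}\mathchar`-{\rm match}(G)=2\,{\rm match}(G)=|V(G)|$. By the characterization of such connected graphs recalled in the introduction (they are exactly the $K_{2n}$ and the $K_{n,n}$), $G$ would be complete or complete bipartite, hence $2K_2$-free, so ${\rm ind}\mathchar`-{\rm match}(G)=1$, contradicting $p\ge 2$. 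Thus $|V(G)|\ge 2r+1$.

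\emph{Upper bound for (1) and (2).} I would use one family. Take vertex-disjoint cliques $C_1,\dots,C_p$ with $|C_i|=2a_i$, $a_i\ge 1$ and $\sum_i a_i=q$ (possible since $p\le q$), an independent set $D$ with $|D|=2(r-q)$, and join every vertex of $D$ to every vertex of $\bigcup_i C_i$; call this $G$. Then $|V(G)|=2q+2(r-q)=2r$. If $p=1$ this is the complete split graph (or $K_{2q}$ when $r=q$), settling (1); if $p\ge 2$ then $r>q$, so $D\neq\emptyset$ joins the cliques and $G$ is connected. Since $r\le 2q$ we have $|D|\le 2q=\sum_i|C_i|$, so $D$ can be matched into the cliques and the remaining clique vertices matched internally, giving a perfect matching and ${\rm match}(G)=r$. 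One edge from each $C_i$ is an induced matching (distinct cliques have no edges between them), so ${\rm ind}\mathchar`-{\rm match}(G)\ge p$; conversely an induced matching contains at most one edge of each clique, and any edge meeting $D$ cannot be enlarged because each vertex of $D$ is adjacent to all clique vertices, so ${\rm ind}\mathchar`-{\rm match}(G)=p$.

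The delicate point is ${\rm min}\mathchar`-{\rm match}(G)=q$. Leaving $D$ unmatched and perfectly matching each clique internally is a maximal matching with $\sum_i a_i=q$ edges, so ${\rm min}\mathchar`-{\rm match}(G)\le q$. For the reverse, let $M$ be any maximal matching and $U=V(G)\setminus V(M)$; by Lemma~\ref{max-matching} $U$ is independent, hence $|U\cap C_i|\le 1$, and $U$ cannot meet both a clique and $D$. If $U\subseteq D$ then $|U|\le 2(r-q)$. If $U\subseteq\bigcup_i C_i$, then all of $D$ is matched, necessarily into the cliques; writing $s_i$ for the number of $D$-vertices matched into $C_i$ and $u_i=|U\cap C_i|$, the number $2a_i-u_i-s_i$ of internally matched vertices of $C_i$ must be even, forcing $s_i\equiv u_i\pmod 2$, so $u_i=1$ implies $s_i\ge 1$ and $|U|=\sum_i u_i\le\sum_i s_i=|D|=2(r-q)$. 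In either case $|U|\le 2(r-q)$, i.e. $|M|\ge q$, so ${\rm min}\mathchar`-{\rm match}(G)=q$ and $\min(p,q,r;|V|)=2r$.

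\emph{Upper bound for (3).} Since $r=q$ makes the graph above disconnected for $p\ge 2$, I would instead use the gluing of Theorems~\ref{Thm-ind} and \ref{Thm-min}. Take $H_i=K_{n_i,n_i}$ with $n_i\ge 1$ and $\sum_i n_i=r$ (possible as $p\le r$), and attach a new central vertex $v$ to one vertex of each $H_i$. Each $H_i$ is complete bipartite with ${\rm min}\mathchar`-{\rm match}=n_i={\rm match}$, and every vertex satisfies $(*_2)$ by Remark~\ref{condition*_remark}; so Theorem~\ref{Thm-ind} gives ${\rm ind}\mathchar`-{\rm match}(G)=p$ and Theorem~\ref{Thm-min} gives ${\rm min}\mathchar`-{\rm match}(G)=\sum_i n_i=r$. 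The union of perfect matchings of the blocks has size $r=\lfloor|V(G)|/2\rfloor$, so ${\rm match}(G)=r$, while $|V(G)|=\sum_i 2n_i+1=2r+1$; with the lower bound this gives $\min(p,r,r;|V|)=2r+1$. The main obstacle throughout is the part (3) lower bound, which is precisely where one cannot avoid the external classification of connected graphs with $2\,{\rm min}\mathchar`-{\rm match}=2\,{\rm match}=|V|$; the second subtle point is the parity/counting argument above guaranteeing that the minimum maximal matching of the multi-clique graph is no smaller than $q$.
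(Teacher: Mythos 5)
Your proposal is correct, but it takes a genuinely different route from the paper. The paper's proof of Theorem \ref{1stMainThm} is essentially a citation: it invokes the classification in \cite[Theorem 2.1]{MY} of all triples $(p,q,r)$ realizable by connected graphs on exactly $2r$ and on $2r+1$ vertices, together with the trivial bound $|V(G)| \geq 2\,{\rm match}(G)$. You instead build everything by hand: the complete join of $p$ disjoint even cliques with an independent set of size $2(r-q)$ for parts (1) and (2), and the gluing construction of Theorems \ref{Thm-ind} and \ref{Thm-min} applied to balanced complete bipartite blocks for part (3). Your lower bound for (3) goes through the Arumugam--Velammal/Sumner characterization (connected graphs in which every maximal matching is perfect are exactly $K_{2n}$ and $K_{n,n}$, hence $2K_2$-free), which the paper only mentions in its introduction but never uses; the paper gets the same non-realizability as part of the cited classification. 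What the paper's approach buys is brevity and a stronger imported statement (the full list of realizable triples for each $n$); what yours buys is logical independence from \cite{MY}, explicit extremal graphs, and a reusable counting argument --- your parity analysis showing ${\rm min}\mathchar`-{\rm match}=q$ for the multi-clique join is the real content and it is sound. Two small points you should tighten: in the verification of ${\rm match}(G)=r$ you should say explicitly that the vertices of $D$ are matched into the cliques \emph{in pairs} (so that each clique retains an even number of internally matched vertices); and since the paper never states the content of \cite[Theorem 2.1]{AV}, you should cite it (or Sumner's randomly matchable graphs theorem) precisely rather than as ``recalled in the introduction.''
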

\begin{proof}
Let
\begin{eqnarray*}
& & {\rm{\bf Graph}_{ind\text{-}match, min\text{-}match, match}}(n) \\
&=& \left\{(p, q, r) \in \mathbb{N}^{3} ~\left|~
\begin{array}{c}
  \mbox{\rm{There\ exists\ a\ connected\ simple\ graph}\ $G$ \ \rm{with}\ $|V(G)| = n$} \\
  \mbox{{\rm{and}} \ $\text{\rm ind-match}(G) = p, \ \text{\rm min-match}(G) = q, \ \text{\rm match}(G) = r$} \\ 
\end{array}
\right \}\right. .  
\end{eqnarray*}
Note that $(p, q, r) \not\in {\rm{\bf Graph}_{ind\text{-}match, min\text{-}match, match}}(n)$ if $n < 2r$ because $|V(G)| \geq 2\rm{match}(G)$. 
By virtue of \cite[Theorem 2.1]{MY}, we have
\begin{eqnarray*}
& & {\rm{\bf Graph}_{ind\text{-}match, min\text{-}match, match}}(2r) \\ 
&=& \left\{ (1, q, r) \in \mathbb{N}^{3} \ \middle| \ 1 \leq q \leq r \leq 2q  \right\} 
\ \cup \ \left\{ (p, q, r) \in \mathbb{N}^{3} \ \middle| \ 2 \leq p \leq q < r \leq 2q \right\} 
\end{eqnarray*}
and  
\begin{eqnarray*}
& & {\rm{\bf Graph}_{ind\text{-}match, min\text{-}match, match}}(2r + 1) 
\ = \ \left\{ (p, q, r) \in \mathbb{N}^{3} \ \middle| \ 1 \leq p \leq q \leq r \leq 2q  \right\} . 
\end{eqnarray*}
Hence we have the desired conclusion. 
\end{proof}

As a corollary, we have 

\begin{Corollary}\label{notPM}
Let $G$ be a connected simple graph. 
Assume that $2 \leq \rm{ind}$-$\rm{match}(G)$ and $\rm{min}$-$\rm{match}(G) = \rm{match}(G)$. 
Then $G$ does not have any perfect matching. 
\end{Corollary}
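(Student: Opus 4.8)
The plan is to deduce the statement directly from Theorem \ref{1stMainThm}(3), combined with the perfect-matching criterion recalled in the introduction: a graph $G$ has a perfect matching if and only if $2\,{\rm match}(G) = |V(G)|$. Thus the entire task reduces to showing that $|V(G)|$ is \emph{strictly} larger than $2\,{\rm match}(G)$.

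First I would fix the parameters. Write $p = {\rm ind}$-${\rm match}(G)$, $q = {\rm min}$-${\rm match}(G)$ and $r = {\rm match}(G)$. Since ${\rm ind}$-${\rm match}(G) \geq 2$ forces $E(G) \neq \emptyset$, the chain $1 \leq p \leq q \leq r \leq 2q$ recalled in the introduction applies, so $(p,q,r)$ is an admissible triple. The hypothesis ${\rm min}$-${\rm match}(G) = {\rm match}(G)$ says exactly $q = r$, and $p \geq 2$ gives $2 \leq p \leq q = r$. This is precisely the range covered by case $(3)$ of Theorem \ref{1stMainThm}.

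Applying Theorem \ref{1stMainThm}(3) then yields $\min(p, r, r; |V|) = 2r + 1$. Since $G$ itself is a connected graph realizing the triple $(p, r, r)$, its vertex count is bounded below by this minimum, so $|V(G)| \geq 2r + 1 > 2r = 2\,{\rm match}(G)$. Hence $2\,{\rm match}(G) \neq |V(G)|$, and by the criterion above $G$ has no perfect matching.

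There is essentially no obstacle once Theorem \ref{1stMainThm} is available; the only point needing a moment's care is verifying that the hypotheses place us in case $(3)$ rather than $(1)$ or $(2)$. The assumption ${\rm ind}$-${\rm match}(G) \geq 2$ rules out case $(1)$ (which requires $p=1$), while ${\rm min}$-${\rm match}(G) = {\rm match}(G)$, i.e.\ $q = r$, rules out case $(2)$ (which requires $q < r$). So $(3)$ is the only applicable case, and it supplies exactly the strict inequality $|V(G)| > 2\,{\rm match}(G)$ that forces the absence of a perfect matching.
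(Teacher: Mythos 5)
Your proof is correct and is essentially the paper's own argument: the corollary is stated immediately after Theorem \ref{1stMainThm} precisely because case $(3)$ gives $\min(p,r,r;|V|) = 2r+1$, forcing $|V(G)| \geq 2r+1 > 2\,{\rm match}(G)$ and hence ruling out a perfect matching. Your verification that the hypotheses land in case $(3)$ rather than $(1)$ or $(2)$ is exactly the right check, and nothing is missing.
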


By virtue of Theorem \ref{1stMainThm} together with Lemma \ref{minE-minV}, we also have 

\begin{Corollary}\label{LowerBound}
Let $p, q, r$ be integers with $2 \leq p \leq q \leq r \leq 2q$. 
Then 
\begin{enumerate}
%    \item[$(1)$] $\min\left( 1, q, r; |E| \right) \geq 2r - 1$. 
    \item[$(1)$] $\min\left( p, q, r; |E| \right) \geq 2r - 1$ \ if \ $2 \leq p \leq q < r \leq 2q$. 
    \item[$(2)$] $\min\left( p, r, r; |E| \right) \geq 2r$ \ if \ $2 \leq p \leq q = r$. 
\end{enumerate}    
\end{Corollary}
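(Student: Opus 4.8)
The plan is to combine the exact vertex counts of Theorem \ref{1stMainThm} with the general edge--vertex inequality of Lemma \ref{minE-minV}, which states $\min\left( p, q, r; |E| \right) \geq \min\left( p, q, r; |V| \right) - 1$. Both ingredients are already established, so the argument is purely a matter of substituting the correct value of $\min\left( p, q, r; |V| \right)$ in each of the two parameter regimes and reading off the resulting edge bound.

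For part $(1)$, the hypothesis $2 \leq p \leq q < r \leq 2q$ is exactly the regime of Theorem \ref{1stMainThm}$(2)$, which gives $\min\left( p, q, r; |V| \right) = 2r$. Inserting this into Lemma \ref{minE-minV} yields $\min\left( p, q, r; |E| \right) \geq 2r - 1$, as claimed. For part $(2)$, the hypothesis $2 \leq p \leq q = r$ falls under Theorem \ref{1stMainThm}$(3)$, giving $\min\left( p, r, r; |V| \right) = 2r + 1$; the same inequality then produces $\min\left( p, r, r; |E| \right) \geq (2r + 1) - 1 = 2r$.

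Since the corollary is a direct consequence of two previously proved statements, I do not anticipate any genuine obstacle. The only point requiring care is to verify that the case hypotheses align correctly: part $(1)$ must invoke the strict inequality $q < r$ so as to land in Theorem \ref{1stMainThm}$(2)$ rather than $(3)$, while part $(2)$ uses the boundary case $q = r$, where the minimal vertex count jumps from $2r$ to $2r + 1$. This jump reflects the absence of a perfect matching recorded in Corollary \ref{notPM}, and it is precisely what makes the edge bound in part $(2)$ one larger than in part $(1)$.
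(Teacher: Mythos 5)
Your proposal is correct and is exactly the paper's argument: the paper derives Corollary~\ref{LowerBound} directly from Theorem~\ref{1stMainThm} combined with Lemma~\ref{minE-minV}, substituting $\min\left( p, q, r; |V| \right) = 2r$ in the regime $q < r$ and $\min\left( p, r, r; |V| \right) = 2r + 1$ in the regime $q = r$, just as you do. Your added remark about matching the case hypotheses to parts $(2)$ and $(3)$ of Theorem~\ref{1stMainThm} is the only point of care, and you handle it correctly.
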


%Section 3
\section{$\min\left( p, q, r; |E| \right)$}

In this section, we investigate $\min\left( p, q, r; |E| \right)$. 
First, for some pairs of integers $p, q$ and $r$ with $1 \leq p \leq q \leq r \leq 2q$, we determine the value of $\min\left( p, q, r; |E| \right)$. 

\begin{Theorem}\label{2ndMainThm}
Let $p, q, r$ be integers with $1 \leq p \leq q \leq r \leq 2q$. Then
\begin{enumerate}
    \item[$(1)$] $\min\left( 1, q, 2q; |E| \right) = \binom{2q + 1}{2}$ and $\min\left( 1, q, 2q - 1; |E| \right) = \binom{2q}{2}$ for all $q \geq 1$. 
    \item[$(2)$] $\min\left( q, q, r; |E| \right) = 2r - 1$ \ if \ $2 \leq p = q < r \leq 2q$. 
    \item[$(3)$] $\min\left( r, r, r; |E| \right) = 2r$ \ if \ $2 \leq p = q = r$. 
\end{enumerate}
\end{Theorem}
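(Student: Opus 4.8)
The plan is to handle the three parts separately, in each deriving the lower bound from a result already proved and matching it with an explicit connected graph. For (1), both lower bounds come straight from Lemma \ref{ind1-1}: with $r = 2q$ it gives $\min(1,q,2q;|E|) \ge \binom{2q+1}{2}$, and with $r = 2q-1$ it gives $\min(1,q,2q-1;|E|) \ge \binom{2q}{2}$. I would realise these with the graph $G_r$ of Subsection \ref{1.5}: by Lemma \ref{G_r} the graph $G_{2q}$ has ind-match $1$, min-match $\lceil 2q/2\rceil = q$, match $2q$ and exactly $\binom{2q+1}{2}$ edges, while $G_{2q-1}$ has ind-match $1$, min-match $\lceil(2q-1)/2\rceil = q$, match $2q-1$ and exactly $\binom{2q}{2}$ edges. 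The only case outside the range of $G_r$ is $q=1$ in the second formula (where $r=1$), which is settled by $K_2$.

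For (2) the lower bound $\min(q,q,r;|E|) \ge 2r-1$ is precisely Corollary \ref{LowerBound}(1). For the upper bound I would exhibit a tree $T$ on $2r$ vertices, so that automatically $|E(T)| = 2r-1$. Writing $k = r-q \ge 1$ and $m = 2q-r \ge 0$ (so $k+m = q$ and $2k+m = r$), take $k$ disjoint paths $a_i - b_i - c_i - d_i$ and $m$ disjoint edges $e_j - f_j$, and glue them by joining $c_1$ to each of $c_2,\dots,c_k$ and to each of $e_1,\dots,e_m$, using $k-1+m = q-1$ new edges. The key idea is to avoid computing ind-match$(T)$ from above and instead lean on the chain ind-match $\le$ min-match $\le$ match: it is enough to write down the perfect matching $\{a_ib_i,c_id_i\}\cup\{e_jf_j\}$, the induced matching $\{a_ib_i\}\cup\{e_jf_j\}$ of size $q$, and the matching $\{b_ic_i\}\cup\{e_jf_j\}$ of size $q$, which is maximal because its complement $\{a_i,d_i\}$ is independent by Lemma \ref{max-matching}. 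These yield match$(T) = r$ directly (as $|V(T)| = 2r$), and since ind-match$(T) \ge q$, min-match$(T) \le q$, and ind-match $\le$ min-match in general, both ind-match$(T)$ and min-match$(T)$ equal $q$.

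For (3) the lower bound $\min(r,r,r;|E|) \ge 2r$ is Corollary \ref{LowerBound}(2) with $p=q=r$. For the upper bound I would take the spider $S$ on $2r+1$ vertices: a centre $v$ joined to $u_1,\dots,u_r$, each $u_i$ carrying one pendant $w_i$, so $|E(S)| = 2r$. The matching $\{u_iw_i : 1 \le i \le r\}$ is at once a maximum matching ($|V(S)| = 2r+1$ forces match $\le r$) and an induced matching (no edge joins two distinct legs), so that both match$(S)$ and ind-match$(S)$ equal $r$; the inequality chain then forces min-match$(S) = r$ too.

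The lower bounds and the perfect and induced matchings are routine; the only delicate point is the gluing in (2). The natural obstacle there would be bounding ind-match$(T)$ directly, which for a tree assembled from several copies of $P_4$ is awkward, but this is sidestepped completely by obtaining the upper bound through min-match via ind-match $\le$ min-match, so that the entire verification collapses to exhibiting three explicit matchings and checking one independence condition. I would still verify the degenerate configurations $m=0$ (i.e. $r=2q$) and $k=1$ on their own to confirm the construction remains a tree carrying all three matchings.
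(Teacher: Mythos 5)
Your proposal is correct, and for parts (1) and (3) it coincides with the paper's proof: the same lower bounds (Lemma \ref{ind1-1} and Corollary \ref{LowerBound}(2)), the same witnesses ($G_{2q}$, $G_{2q-1}$, $K_2$ for the degenerate $q=1$ case, and the spider, which is exactly the paper's $G_r^{(3)}$). The only genuine difference is the witness tree in part (2). The paper splits this case in two: for $r=q+1$ it uses the tree $G_q^{(1)}$ (a $P_4$ with $q-1$ pendant paths of length $2$ attached at $z_2$ --- which is in fact isomorphic to your construction when $r-q=1$), and for $q+2\leq r\leq 2q$ it uses a caterpillar-type tree $G_{q,r}^{(2)}$ built on a long path $z_1,\dots,z_{2r-2q+2}$ with pendant edges at $z_2$ and pendant $P_2$'s hung along the path. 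You instead glue $r-q$ copies of $P_4$ and $2q-r$ copies of $K_2$ at a single central vertex $c_1$, which handles all of $q<r\leq 2q$ uniformly and eliminates the case split; the verification trick is identical in both treatments (exhibit an induced matching of size $q$, a maximal matching of size $q$ certified via Lemma \ref{max-matching}, and a perfect matching, then squeeze with ${\rm ind}\mathchar`-{\rm match}\leq{\rm min}\mathchar`-{\rm match}\leq{\rm match}$). Your arithmetic checks out: $4(r-q)+2(2q-r)=2r$ vertices, $3(r-q)+(2q-r)+(q-1)=2r-1$ edges, the displayed matchings have the right sizes, and the complement $\{a_i,d_i\}$ of the size-$q$ maximal matching is indeed independent. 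So your route is a mild but genuine simplification of the paper's part (2), buying uniformity at no cost.
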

\begin{proof}
(1) Let $G_{2q}$ be the graph which appears in  \ref{1.5}. 
By virtue of Lemma \ref{G_r}, it follows that ${\rm ind}$-${\rm match}(G_{2q}) = 1$, ${\rm min}$-${\rm match}(G_{2q}) = q$ and ${\rm match}(G_{2q}) = 2q$. 
Since $|E(G_{2q})| = \binom{2q + 1}{2}$, we have $\min\left( 1, q, 2q; |E| \right) \leq \binom{2q + 1}{2}$. 
Moreover, Lemma \ref{ind1-1} says that $\min\left( 1, q, 2q; |E| \right) \geq \binom{2q + 1}{2}$ holds. 
Thus it follows that $\min\left( 1, q, 2q; |E| \right) = \binom{2q + 1}{2}$. 
As the same argument works for $G_{2q - 1}$, we also have $\min\left( 1, q, 2q - 1; |E| \right) = \binom{2q}{2}$ for all $q \geq 2$. 
Finally, it is easy to see that $\min\left( 1, 1, 1; |E| \right) = 1$. 

(2) First, we show $\min\left( q, q, q + 1; |E| \right) = 2q + 1$. 
Let $G_{q}^{(1)}$ be the graph as follows; see Figure \ref{fig:G_{q}^{(1)}}: 
\begin{itemize}
    \item $\displaystyle \frac{}{} V\left( G_{q}^{(1)} \right) = X_{q - 1} \cup Y_{q - 1} \cup Z_{4}$. 
    \item $\displaystyle E\left( G_{q}^{(1)} \right) = \left\{ \bigcup_{i = 1}^{q - 1} \{x_i, y_i\} \right\} \ \cup \ \left\{ \bigcup_{i = 1}^{q - 1} \{y_i, z_2\} \right\} \ \cup \ \left\{ \bigcup_{i = 1}^{3} \{z_i, z_{i + 1}\} \right\}$. 
\end{itemize}

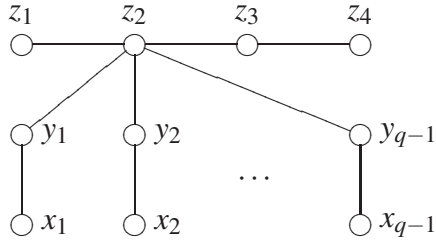
\begin{figure}[htbp]
\centering
%\bigskip

\begin{xy}
	\ar@{} (0,0);(50, -16)  *++! L{x_{1}} *\cir<4pt>{} = "X1"
	\ar@{} (0,0);(65, -16)  *++! L{x_{2}} *\cir<4pt>{} = "X2"
	\ar@{} (0,0);(95, -16)  *++! L{x_{q-1}} *\cir<4pt>{} = "Xq-1"
	\ar@{-} "X1";(50, -4) *++! L{y_{1}} *\cir<4pt>{} = "Y1";
	\ar@{-} "X2";(65, -4) *++!L{y_{2}} *\cir<4pt>{} = "Y2";
	\ar@{} (0, 0); (76, -10) *++!L{\cdots}
	\ar@{-} "Xq-1";(95, -4) *++!L{y_{q-1}} *\cir<4pt>{} = "Yq-1";
    \ar@{-} "Y1"; (65, 8) *++!D{z_{2}} *\cir<4pt>{} = "Z2";
    \ar@{-} "Z2"; (50, 8) *++!D{z_{1}} *\cir<4pt>{} = "Z1";
    \ar@{-} "Z2"; (80, 8) *++!D{z_{3}} *\cir<4pt>{} = "Z3";
    \ar@{-} "Z3"; (95, 8) *++!D{z_{4}} *\cir<4pt>{} = "Z4";
    \ar@{-} "Y2";"Z2";
    \ar@{-} "Yq-1";"Z2";
\end{xy}

  \caption{The graph $G_{q}^{(1)}$}
  \label{fig:G_{q}^{(1)}}
\end{figure}

Now we prove that ${\rm ind}\mathchar`-{\rm match}\left( G_{q}^{(1)} \right) = {\rm min}\mathchar`-{\rm match}\left( G_{q}^{(1)} \right) = q$ and ${\rm match}\left( G_{q}^{(1)} \right) = q + 1$. 
\begin{itemize}
    \item Since $\left\{ \bigcup_{i = 1}^{q - 1} \{x_i, y_i\} \right\} \ \cup \ \{z_3, z_4\}$ is an induced matching and $\left\{ \bigcup_{i = 1}^{q - 1} \{x_i, y_i\} \right\} \  \cup \ \{z_2, z_3\}$ is a maximal matching of $G_{q}^{(1)}$, we have
\[
q \leq {\rm ind}\mathchar`-{\rm match}\left( G_{q}^{(1)} \right) \leq {\rm min}\mathchar`-{\rm match}\left( G_{q}^{(1)} \right) \leq q.  
\]
    Hence ${\rm ind}\mathchar`-{\rm match}\left( G_{q}^{(1)} \right) = {\rm min}\mathchar`-{\rm match}\left( G_{q}^{(1)} \right) = q$ holds.
    \item Since $\left\{ \bigcup_{i = 1}^{q - 1} \{x_i, y_i\} \right\} \ \cup \ \left\{ \bigcup_{i = 1}^{2 \frac{}{}} \{z_{2i - 1}, z_{2i}\} \right\}$ is a perfect matching of $G_{q}^{(1)}$, one has ${\rm match}\left( G_{q}^{(1)} \right) = q + 1$.
\end{itemize} 
Thus it follows that  
\[
\min\left( q, q, q + 1; |E| \right) \leq 
\left| E\left( G_{q}^{(1)} \right) \right| = 2q + 1.
\]
By virtue of this fact together with Corollary \ref{LowerBound}(1), $\min\left( q, q, q + 1; |E| \right) = 2q + 1$ holds. 

Next, we show $\min\left( q, q, r; |E| \right) = 2r - 1$ for all $2 \leq q$ and $q + 2 \leq r \leq 2q$. 
We define the graph $G_{q, r}^{(2)}$ as follows; see Figure \ref{fig:G_{q, r}^{(2)}}:

\begin{itemize}
    \item $\displaystyle \frac{}{} V\left( G_{q, r}^{(2)} \right) = X_{2q - r + 1} \cup Y_{2q - r + 1} \cup Z_{2r - 2q + 2} \cup U_{r - q - 2} \cup V_{r - q - 2}$. 
    \item $\displaystyle E\left( G_{q, r}^{(2)} \right) = \left\{ \bigcup_{i = 1}^{2q - r + 1} \{x_i, y_i\} \right\} \ \cup \ \left\{ \bigcup_{i = 1}^{2q - r + 1} \{y_i, z_2\} \right\} \ \cup \ \left\{ \bigcup_{i = 1}^{2r - 2q + 1} \{z_i, z_{i + 1}\}  \right\}$ \\ $\displaystyle   \cup \ \left\{ \bigcup_{i = 1}^{r - q - 2} \{u_i, v_i\} \right\} \ \cup \ \left\{ \bigcup_{i = 1}^{r - q - 2} \{v_i, z_{2i + 5}\} \right\}$. 
\end{itemize}
Note that $G_{q, r}^{(2)}$ is a tree with $\left| V\left( G_{q, r}^{(2)} \right) \right| = 2r$ and $\left| E\left( G_{q, r}^{(2)} \right) \right| = 2r - 1$. 

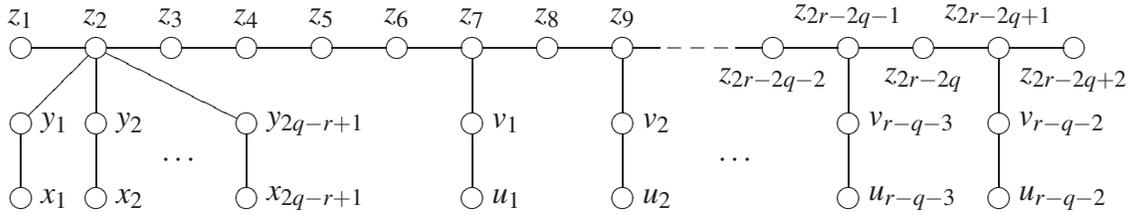
\begin{figure}[htbp]
\centering
%\bigskip

\begin{xy}
    
	\ar@{} (0,0);(0, -10)  *++! L{x_{1}} *\cir<4pt>{} = "X1"
	\ar@{} (0,0);(10, -10)  *++! L{x_{2}} *\cir<4pt>{} = "X2"
	\ar@{} (0,0);(30, -10)  *++! L{x_{2q - r + 1}} *\cir<4pt>{} = "X2q-r+1"
	\ar@{-} "X1";(0, 0) *++! L{y_{1}} *\cir<4pt>{} = "Y1";
	\ar@{-} "X2";(10, 0) *++!L{y_{2}} *\cir<4pt>{} = "Y2";
	\ar@{} (0, 0); (16, -5) *++!L{\cdots}
	\ar@{-} "X2q-r+1";(30, 0) *++!L{y_{2q - r + 1}} *\cir<4pt>{} = "Y2q-r+1";
    \ar@{-} "Y1"; (10, 10) *++!D{z_{2}} *\cir<4pt>{} = "Z2";
    \ar@{-} "Z2"; (0, 10) *++!D{z_{1}} *\cir<4pt>{} = "Z1";
    \ar@{-} "Z2"; (20, 10) *++!D{z_{3}} *\cir<4pt>{} = "Z3";
    \ar@{-} "Z3"; (30, 10) *++!D{z_{4}} *\cir<4pt>{} = "Z4";
    \ar@{-} "Z4"; (40, 10) *++!D{z_{5}} *\cir<4pt>{} = "Z5";
    \ar@{-} "Z5"; (50, 10) *++!D{z_{6}} *\cir<4pt>{} = "Z6";
    \ar@{-} "Z6"; (60, 10) *++!D{z_{7}} *\cir<4pt>{} = "Z7";
    \ar@{-} "Z7"; (70, 10) *++!D{z_{8}} *\cir<4pt>{} = "Z8";
    \ar@{-} "Z8"; (80, 10) *++!D{z_{9}} *\cir<4pt>{} = "Z9";
    \ar@{} "Z9"; (100, 10) *++!U{z_{2r - 2q - 2}} *\cir<4pt>{} = "Z2r-2q-2";
    \ar@{-} "Z2r-2q-2"; (110, 10) *++!D{z_{2r - 2q - 1}} *\cir<4pt>{} = "Z2r-2q-1";
    \ar@{-} "Z2r-2q-1"; (120, 10) *++!U{z_{2r - 2q}} *\cir<4pt>{} = "Z2r-2q";
    \ar@{-} "Z2r-2q"; (130, 10) *++!D{z_{2r - 2q + 1}} *\cir<4pt>{} = "Z2r-2q+1";
    \ar@{-} "Z2r-2q+1"; (140, 10) *++!U{z_{2r - 2q + 2}} *\cir<4pt>{} = "Z2r-2q+2";
    \ar@{-} "Z9"; (85, 10);
    \ar@{-} "Z2r-2q-2"; (95, 10);
    \ar@{--} (86, 10); (94, 10);
    \ar@{-} "Y2";"Z2";
    \ar@{-} "Y2q-r+1";"Z2";
    \ar@{-} "Z7"; (60, 0) *++!L{v_{1}} *\cir<4pt>{} = "V1";
    \ar@{-} "V1"; (60, -10) *++!L{u_{1}} *\cir<4pt>{} = "U1";
    \ar@{-} "Z9"; (80, 0) *++!L{v_{2}} *\cir<4pt>{} = "V2";
    \ar@{-} "V2"; (80, -10) *++!L{u_{2}} *\cir<4pt>{} = "U2";
    \ar@{-} "Z2r-2q-1"; (110, 0) *++!L{v_{r - q - 3}} *\cir<4pt>{} = "Vr-q-3";
    \ar@{-} "Vr-q-3"; (110, -10) *++!L{u_{r - q - 3}} *\cir<4pt>{} = "Ur-q-3";
    \ar@{-} "Z2r-2q+1"; (130, 0) *++!L{v_{r - q - 2}} *\cir<4pt>{} = "Vr-q-2";
    \ar@{-} "Vr-q-2"; (130, -10) *++!L{u_{r - q - 2}} *\cir<4pt>{} = "Ur-q-2";
    \ar@{} (0, 0); (90, -5) *++!L{\cdots}
\end{xy}

  \caption{The graph $G_{q, r}^{(2)}$}
  \label{fig:G_{q, r}^{(2)}}
\end{figure}

Now we prove that ${\rm ind}\mathchar`-{\rm match}\left( G_{q, r}^{(2)} \right) = {\rm min}\mathchar`-{\rm match}\left( G_{q, r}^{(2)} \right) = q$ and ${\rm match}\left( G_{q, r}^{(2)} \right) = r$.
\begin{itemize}
    \item We can see that  
\[
\left\{ \bigcup_{i = 1}^{2q - r + 1} \{x_i, y_i\} \right\} \ \cup \ \{z_4, z_5\} \ \cup \ \left\{ \bigcup_{i = 1}^{r - q  - 2} \{u_i, v_i\} \right\}
\]
    is an induced matching of $G_{q, r}^{(2)}$ and 
\[
\left\{ \bigcup_{i = 1}^{2q - r} \{x_i, y_i\} \right\} \ \cup \ \{y_{2q - r + 1}, z_{2}\} \ \cup \ \{z_4, z_5\} \ \cup \ \left\{ \bigcup_{i = 1}^{r - q  - 2} \{v_i, z_{2i + 5}\} \right\}
\]
    is a maximal matching of $G_{q, r}^{(2)}$. 
    Hence one has 
\[
q \leq {\rm ind}\mathchar`-{\rm match}\left( G_{q, r}^{(2)} \right) \leq {\rm min}\mathchar`-{\rm match}\left( G_{q, r}^{(2)} \right) \leq q. 
\]
    Thus ${\rm ind}\mathchar`-{\rm match}\left( G_{q, r}^{(2)} \right) = {\rm min}\mathchar`-{\rm match}\left( G_{q, r}^{(2)} \right) = q$ holds. 
    \item Since 
\[
\left\{ \bigcup_{i = 1}^{2q - r + 1} \{x_i, y_i\} \right\} \ \cup \ \left\{ \bigcup_{i = 1}^{r - q + 1} \{z_{2i - 1}, z_{2i}\} \right\} \ \cup \ \left\{ \bigcup_{i = 1}^{r - q  - 2} \{u_i, v_i\} \right\}
\]
    is a perfect matching of $G_{q, r}^{(2)}$, it follows that ${\rm match}\left( G_{q, r}^{(2)} \right) = r$. 
\end{itemize}
Hence one has 
\[
\min\left( q, q, r; |E| \right) \leq 
|E(G_{q, r})| = 2r - 1.
\]
Corollary \ref{LowerBound}(1) says that $\min\left( q, q, r; |E| \right) \geq 2r - 1$. 
Therefore $\min\left( q, q, r; |E| \right) = 2r - 1$. 

(3) We define the graph $G_{r}^{(3)}$ as follows; see Figure \ref{fig:G_{r}^{(3)}}: 
\begin{itemize}
    \item $\displaystyle \frac{}{} V\left( G_{r}^{(3)} \right) = X_{r} \cup Y_{r} \cup \{z\}$. 
    \item $\displaystyle E\left( G_{r}^{(3)} \right) = \left\{ \bigcup_{i = 1}^{r} \{x_i, y_i\} \right\} \ \cup \ \left\{ \bigcup_{i = 1}^{r} \{y_i, z\} \right\}$. 
\end{itemize}

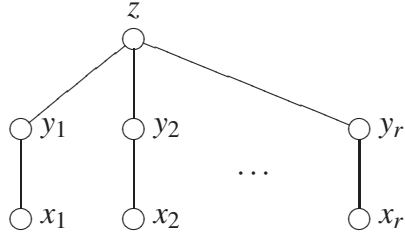
\begin{figure}[htbp]
\centering
%\bigskip

\begin{xy}
	\ar@{} (0,0);(50, -16)  *++! L{x_{1}} *\cir<4pt>{} = "X1"
	\ar@{} (0,0);(65, -16)  *++! L{x_{2}} *\cir<4pt>{} = "X2"
	\ar@{} (0,0);(95, -16)  *++! L{x_{r}} *\cir<4pt>{} = "Xr"
	\ar@{-} "X1";(50, -4) *++! L{y_{1}} *\cir<4pt>{} = "Y1";
	\ar@{-} "X2";(65, -4) *++!L{y_{2}} *\cir<4pt>{} = "Y2";
	\ar@{} (0, 0); (76, -10) *++!L{\cdots}
	\ar@{-} "Xr";(95, -4) *++!L{y_{r}} *\cir<4pt>{} = "Yr";
    \ar@{-} "Y1"; (65, 8) *++!D{z} *\cir<4pt>{} = "Z1";
    \ar@{-} "Y2";"Z1";
    \ar@{-} "Yr";"Z1";
\end{xy}

  \caption{The graph $G_{r}^{(3)}$}
  \label{fig:G_{r}^{(3)}}
\end{figure}

Then it is easy to see that ${\rm ind}$-${\rm match}\left( G_{r}^{(3)} \right) = {\rm min}$-${\rm match}\left( G_{r}^{(3)} \right) = {\rm match}\left( G_{r}^{(3)} \right) = r$. 
Hence one has 
\[
\min\left( r, r, r; |E| \right) \leq 
\left| E\left( G_{r}^{(3)} \right) \right| = 2r.
\]
By virtue of this fact together with Corollary \ref{LowerBound}(2), we have $\min\left( r, r, r; |E| \right) = 2r$. 
\end{proof} 

Next, we give upper bounds for $\min\left( 1, q, r; |E| \right)$ in the case that $2 \leq q \leq r \leq 2q - 2$. 

\begin{Theorem}\label{3rdMainThm}
Let $q, r$ be integers with $2 \leq q \leq r \leq 2q - 2$. Then
\begin{enumerate}
    \item[$(1)$] $\min\left( 1, q, q; |E| \right) \leq q^2$. 
    \item[$(2)$] $\min\left( 1, q, q + 1; |E| \right) \leq q^2 + 2$. 
    \item[$(3)$] $\min\left( 1, q, r; |E| \right) \leq \min\{ f_{1}(q, r), f_{2}(q, r) \}$ if $q + 2 \leq r \leq 2q - 2$, where 
    \[
    f_{1}(q, r) = r(q - 1) + \binom{r - q + 2}{2}, \ \ f_{2}(q, r) = 2(r - q) + \binom{2q}{2}. 
    \]
\end{enumerate}
\end{Theorem}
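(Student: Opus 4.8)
The plan is to prove all three inequalities by exhibiting explicit connected graphs $G$ with ${\rm ind}$-${\rm match}(G)=1$, ${\rm min}$-${\rm match}(G)=q$ and ${\rm match}(G)=r$ and counting their edges. Since ${\rm ind}$-${\rm match}(G)=1$ is equivalent to $G$ being $2K_2$-free, I would build each $G$ from split graphs or complete bipartite graphs, which are automatically $2K_2$-free (cf. \cite{FoldesHammer1977} and the proof of Lemma \ref{G_r}). For every candidate $G$ the verification has the same three parts: ${\rm match}(G)=r$ by writing down a perfect matching on its $2r$ vertices; ${\rm min}$-${\rm match}(G)\le q$ by writing down one maximal matching of size $q$ (using Lemma \ref{max-matching}, i.e. checking that the set of uncovered vertices is independent); and ${\rm min}$-${\rm match}(G)\ge q$, which is the substantive point.

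For the bound $f_2$ I would take the complete graph $K_{2q}$ on $\{x_1,\dots,x_{2q}\}$ and attach a pendant edge (a whisker) $\{x_i,y_i\}$ to each of the first $2(r-q)$ vertices; this is a split graph, hence $2K_2$-free, it is connected, and it has $\binom{2q}{2}+2(r-q)=f_2(q,r)$ edges. Matching the $2(r-q)$ whiskers and then taking a perfect matching of the remaining $2(2q-r)$ clique vertices gives a perfect matching of size $2(r-q)+(2q-r)=r$, so ${\rm match}=r$. A perfect matching of $K_{2q}$ alone ($q$ edges) leaves only the whiskers uncovered, and these form an independent set whose neighbours are all covered, so it is maximal and ${\rm min}$-${\rm match}\le q$. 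For the reverse inequality I would argue that in any maximal matching $M$ at most one clique vertex is uncovered (the clique vertices are pairwise adjacent, so by Lemma \ref{max-matching} at most one lies in the independent set $V\setminus V(M)$); writing $i$, resp. $j$, for the number of edges of $M$ inside the clique, resp. joining a clique vertex to a whisker, the inequality $2i+j\ge 2q-1$ forces $|M|=i+j\ge q$.

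The remaining bounds are variations of the same idea with a complete bipartite ``forcing'' core. Part (1) is simply $G=K_{q,q}$: it is $2K_2$-free, has a perfect matching of size $q$, and every maximal matching is forced to cover a whole side (again by Lemma \ref{max-matching}), whence ${\rm min}$-${\rm match}=q$ and $|E|=q^2$. For part (2) I would attach one whisker to a vertex $a_1$ of one side and one whisker to a vertex $b_1$ of the other side of $K_{q,q}$; a single whisker on each side keeps the graph $2K_2$-free, raises the matching number to $q+1$, and by the same ``cover one side'' argument keeps ${\rm min}$-${\rm match}=q$, giving $|E|=q^2+2$. For the bound $f_1$, which beats $f_2$ when $r-q$ is small, I would use a $2K_2$-free graph on $2r$ vertices that combines a complete bipartite forcing part (responsible for ${\rm min}$-${\rm match}=q$) with a clique of size $r-q+2$ carrying the extra matchable vertices needed to push ${\rm match}$ up to $r$; the edge count is arranged to equal $r(q-1)+\binom{r-q+2}{2}$.

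The main obstacle throughout is the lower bound ${\rm min}$-${\rm match}(G)\ge q$, and the delicate issue is that the very cliques one is tempted to add in order to stay $2K_2$-free (so that several whiskers can coexist) can create short maximal matchings and thereby \emph{lower} ${\rm min}$-${\rm match}$ below $q$. The systematic tool I would use is Lemma \ref{bounds-for-indep}(2), namely $2\,{\rm min}\text{-}{\rm match}(G)\ge |V(G)|-\alpha(G)$: bounding the independence number $\alpha(G)$ of each construction from above (equivalently, showing no large independent set can be left uncovered) gives ${\rm min}$-${\rm match}(G)\ge q$, and matching this against the explicit size-$q$ maximal matching pins the value down. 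Verifying that the clique used in the $f_1$-construction does not produce a cheaply coverable configuration — i.e. that every maximal matching still meets a full side of the bipartite core — is the step I expect to require the most care.
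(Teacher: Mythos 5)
Parts (1), (2) and the $f_{2}$ half of (3) of your proposal are correct and essentially coincide with the paper's proof. Your part (1) is the paper's ($K_{q,q}$); your part (2) graph (one whisker on each side of $K_{q,q}$, the whiskered vertices being adjacent) is exactly the paper's $G_{q}^{(4)}$, and your ``cover one side'' argument is sound. For $f_{2}$, the paper simply cites the graph $G_{q,r-q,0}^{(1)}$ of \cite{MY}, which is the graph you build directly ($K_{2q}$ with $2(r-q)$ pendant edges); your verification, in particular the counting argument that at most one clique vertex is uncovered, so $2i+j\geq 2q-1$, and hence $i+j\geq q$ (else $2i+j\leq 2(i+j)\leq 2q-2$), is correct and makes this part self-contained rather than a citation.

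The $f_{1}$ half of (3), however, is a genuine gap, and your sketch as stated cannot be repaired by ``arranging the edge count.'' You propose a complete bipartite core together with a \emph{bare} clique of size $r-q+2$ on a total of $2r$ vertices. The numbers do not fit: $K_{q-1,q-1}$ (or $K_{q,q}$) plus a disjoint $K_{r-q+2}$ has only about $q+r$ vertices, not $2r$, and, more fatally, a bare clique $K_{r-q+2}$ contributes only $\lfloor (r-q+2)/2\rfloor$ to the matching number, whereas to reach ${\rm match}=r$ the non-bipartite part must supply $r-q+1$ independent matching edges. The paper's construction $G_{q,r}^{(5)}$ resolves exactly this: the second block is not a clique of size $r-q+2$ but a \emph{whiskered} clique $W(K_{r-q+1})$ (a clique $K_{r-q+1}$ with a pendant edge at every vertex), which has the same edge count $\binom{r-q+1}{2}+(r-q+1)=\binom{r-q+2}{2}$ but $2(r-q+1)$ vertices and matching number $r-q+1$. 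Moreover the attachment to $K_{q-1,q-1}$ is asymmetric and deliberate: each $x_{i}$ is joined to the last two clique vertices $z_{r-q},z_{r-q+1}$ and each $y_{i}$ to the first $r-q-1$ clique vertices, giving $(q-1)(r-q+1)$ connecting edges, whence $|E|=(q-1)r+\binom{r-q+2}{2}=f_{1}(q,r)$. These choices are what make the three verifications go through: ${\rm ind}$-${\rm match}=1$ via Lemma \ref{ind1-3} applied to the $z_{i}$'s, ${\rm min}$-${\rm match}\leq q$ via an explicit maximal matching of size $q$, and ${\rm min}$-${\rm match}\geq q$ because $X_{q-1}\cup Y_{q-1}\cup\{z_{1},z_{r-q}\}$ induces a $K_{q,q}$ (this is where the asymmetric attachment is used). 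Without identifying the whiskered-clique structure and this attachment pattern, the bound $\min\left(1,q,r;|E|\right)\leq f_{1}(q,r)$ is not established; your own closing remark correctly flags this as the hard step, but flagging it is not filling it.
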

\begin{proof}
(1) It follows from that ${\rm ind}$-${\rm match}(K_{q, q}) = 1$, ${\rm min}$-${\rm match}(K_{q, q}) = {\rm match}(K_{q, q}) = q$ and $|E(K_{q, q})| = q^2$. 

(2) We define the graph $G_{q}^{(4)}$ as follows; see Figure \ref{fig:G_{q}^{(4)}}:
\begin{itemize}
    \item $\displaystyle \frac{}{} V\left( G_{q}^{(4)} \right) = X_{q} \cup Y_{q} \cup Z_{2}$. 
    \item $\displaystyle E\left( G_{q}^{(4)} \right) = \left\{ \bigcup_{1 \leq i, j \leq q} \{x_i, y_j\} \right\} \ \cup \ \{x_q, z_1\} \ \cup \ \{y_q, z_2\}$. 
\end{itemize}
Note that the induced subgraph $G_{q}^{(4)}[X_{q} \cup Y_{q}]$ is isomorphic to $K_{q, q}$. 

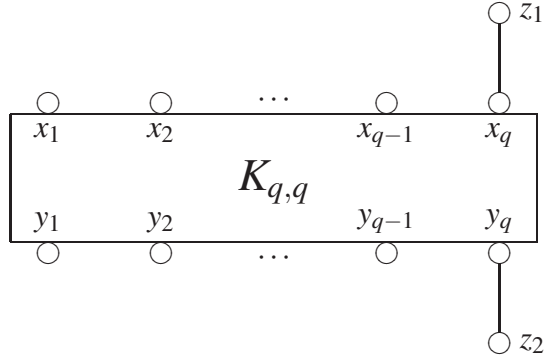
\begin{figure}[htbp]
\centering
%\bigskip

\begin{xy}
	\ar@{} (0,0);(45, -16)  *++! U{x_{1}} *\cir<4pt>{} = "X1";
	\ar@{} (0,0);(60, -16)  *++! U{x_{2}} *\cir<4pt>{} = "X2";
	\ar@{} (0,0);(90, -16)  *++! U{x_{q-1}} *\cir<4pt>{} = "Xq-1";
	\ar@{} (0,0);(75, -12) *++!U{\cdots};
	\ar@{} (0,0);(105, -16)  *++! U{x_{q}} *\cir<4pt>{} = "Xq";
    \ar@{-} "Xq";(105, -4) *++!L{z_{1}} *\cir<4pt>{} = "Z1";
    \ar@{} (0,0);(45, -36)  *++! D{y_{1}} *\cir<4pt>{} = "Y1";
    \ar@{} (0,0);(60, -36)  *++! D{y_{2}} *\cir<4pt>{} = "Y2";
    \ar@{} (0,0);(90, -36)  *++! D{y_{q-1}} *\cir<4pt>{} = "Yq-1";
    \ar@{} (0,0);(75, -33) *++!U{\cdots};
    \ar@{} (0,0);(105, -36)  *++! D{y_{q}} *\cir<4pt>{} = "Yq";
    \ar@{-} "Yq";(105, -48) *++!L{z_{2}} *\cir<4pt>{} = "Z2";
	\ar@{-} (40,-17.5); (110, -17.5);
	\ar@{-} (40,-17.5); (40, -34.5);
	\ar@{-} (40,-34.5); (110, -34.5);
	\ar@{-} (110,-17.5); (110, -34.5)
	\ar@{} (0,0);(75,-26.5) *{\text{\Large{$K_{q, q}$}}};
\end{xy}

  \caption{The graph $G_{q}^{(4)}$}
  \label{fig:G_{q}^{(4)}}
\end{figure}

Now we prove that ${\rm ind}\mathchar`-{\rm match}\left( G_{q}^{(4)} \right) = 1$, ${\rm min}\mathchar`-{\rm match}\left( G_{q}^{(4)} \right) = q$ and ${\rm match}\left( G_{q}^{(4)} \right) = q + 1$. 
\begin{itemize}
    \item Suppose that ${\rm ind}\mathchar`-{\rm match}\left( G_{q}^{(4)} \right) \geq 2$. 
    Then there exists an induced matching $I$ of $G_{q}^{(4)}$ with $|I| \geq 2$. 
    Since $V\left( G_{q}^{(4)} \right) \setminus N_{G_{q}^{(4)}}[x_{q}] = X_{q-1} \cup \{z_{2}\}$ is an independent set of $G_{q}^{(4)}$, one has $x_{q} \not\in V(I)$ by Lemma \ref{ind1-3}. 
    Similarly, we also have $y_{q} \not\in V(I)$. 
    Then $z_{1}, z_{2} \not\in V(I)$. 
    Hence $V(I) \subset X_{q-1} \cup Y_{q-1}$. In particular, $I$ is an induced matching of the induced subgraph $G_{q}^{(4)}[X_{q-1} \cup Y_{q-1}]$. 
    Thus one has 
    \[
    2 \leq |I| \leq {\rm ind}\mathchar`-{\rm match}\left( G_{q}^{(4)}[X_{q-1} \cup Y_{q-1}] \right) = {\rm ind}\mathchar`-{\rm match}\left( K_{q-1, q-1} \right) = 1, 
    \]
    but this is a contradiction. 
    Therefore we have ${\rm ind}\mathchar`-{\rm match}\left( G_{q}^{(4)} \right) = 1$. 
    \item Since $\bigcup_{i = 1}^{q} \{ x_{i}, y_{i} \}$ is a maximal matching of $G_{q}^{(4)}$, we have ${\rm min}\mathchar`-{\rm match}\left( G_{q}^{(4)} \right) \leq q$. 
    Moreover, by virtue of Lemma \ref{induced-subgraph}(2), one has 
    \[
    {\rm min}\mathchar`-{\rm match}\left( G_{q}^{(4)} \right) \geq {\rm min}\mathchar`-{\rm match}\left( G_{q}^{(4)}[X_{q} \cup Y_{q}] \right) = {\rm min}\mathchar`-{\rm match}\left( K_{q, q} \right) = q. 
    \]
    Hence ${\rm min}\mathchar`-{\rm match}\left( G_{q}^{(4)} \right) = q$ holds. 
    \item It follows that ${\rm match}\left( G_{q}^{(4)} \right) = q + 1$ because $\left\{ \bigcup_{i = 1}^{q-1} \{ x_{i}, y_{i} \} \right\} \cup \{x_{q}, z_{1}\} \cup \{y_{q}, z_{2}\}$ is a perfect matching of $G_{q}^{(4)}$. 
\end{itemize}
Therefore we have $\min\left( 1, q, q + 1; |E| \right) \leq q^2 + 2$ since $\left| E\left( G_{q}^{(4)} \right) \right| = q^2 + 2$. 

(3) First, we define the graph $G_{q,r}^{(5)}$ as follows; see Figure \ref{fig:G_{q,r}^{(5)}}:
\begin{itemize}
    \item $\displaystyle \frac{}{} V\left( G_{q,r}^{(5)} \right) = X_{q-1} \cup Y_{q-1} \cup Z_{r-q+1} \cup W_{r-q+1}$. 
    \item $\displaystyle E\left( G_{q,r}^{(5)} \right) = \left\{ \bigcup_{\substack{1 \leq i \leq q-1 \\ 1 \leq j \leq q-1}} \{x_i, y_j\} \right\} \ \cup \ \left\{ \bigcup_{\substack{1 \leq i \leq q - 1 \\ 1 \leq j \leq 2}} \{x_i, z_{r-q-1+j}\} \right\} \\ \cup \ \left\{ \bigcup_{\substack{1 \leq i \leq q-1 \\ 1 \leq j \leq r-q-1}} \{y_i, z_j\} \right\}  \ \cup \ \left\{ \bigcup_{1 \leq i < j \leq r-q+1} \{z_i, z_j\} \right\} \ \cup \ \left\{ \bigcup_{1 \leq i \leq r-q+1} \{z_i, w_i\} \right\}$. 
\end{itemize} 
Note that the induced subgraph $G_{q, r}^{(5)}[X_{q-1} \cup Y_{q-1}]$ is isomorphic to $K_{q-1, q-1}$ and the induced subgraph $G_{q, r}^{(5)}[Z_{r-q-1} \cup W_{r-q-1}]$ is isomorphic to the graph $G_{r-q+1}$ which appears in \ref{1.5}. 

\begin{figure}[htbp]
\centering
%\bigskip

\begin{xy}
	\ar@{} (0,0);(15, 0)  *++! D{x_{1}} *\cir<4pt>{} = "X1";
    \ar@{} (0,0);(25, 0)  *++! D{x_{2}} *\cir<4pt>{} = "X2";
    \ar@{} (0,0);(37.5, 3.5) *++!U{\cdots};
    \ar@{} (0,0);(50, 0)  *++! D{x_{2q-r}} *\cir<4pt>{} = "X2q-r";
    \ar@{} (0,0);(62.5, 3.5) *++!U{\cdots};
    \ar@{} (0,0);(75, 0)  *++! D{x_{q-1}} *\cir<4pt>{} = "Xq-1";
    \ar@{} (0,0);(95, 0)  *++! RD{z_{1}} *\cir<4pt>{} = "Z1";
    \ar@{} (0,0);(105, 0)  *++! RD{z_{2}} *\cir<4pt>{} = "Z2";
    \ar@{} (0,0);(115, 3.5) *++!U{\cdots};
    \ar@{} (0,0);(125, 0)  *++! L{z_{r-q-1}} *\cir<4pt>{} = "Zr-q-1";
    \ar@{} (0,0);(15, -30) *++! U{y_{1}} *\cir<4pt>{} = "Y1";
    \ar@{} (0,0);(25, -30) *++! U{y_{2}} *\cir<4pt>{} = "Y2";
    \ar@{} (0,0);(35, -26.5) *++!U{\cdots};
    \ar@{} (0,0);(45, -30) *++! U{y_{r-q-1}} *\cir<4pt>{} = "Yr-q-1";
    \ar@{} (0,0);(55, -30) *++! UL{y_{r-q}} *\cir<4pt>{} = "Yr-q";
    \ar@{} (0,0);(65, -26.5) *++!U{\cdots};
    \ar@{} (0,0);(75, -30) *++! U{y_{q-1}} *\cir<4pt>{} = "Yq-1";
    \ar@{} (0,0);(95, -30)  *++! R{z_{r-q}} *\cir<4pt>{} = "Zr-q";
    \ar@{} (0,0);(105, -30)  *++! L{z_{r-q+1}} *\cir<4pt>{} = "Zr-q+1";
    \ar@{-} "Z1";(95, 10) *++!D{w_{1}} *\cir<4pt>{} = "W1";
    \ar@{-} "Z2";(105, 10) *++!D{w_{2}} *\cir<4pt>{} = "W2";
    \ar@{-} "Zr-q-1";(125, 10) *++!D{w_{r-q-1}} *\cir<4pt>{} = "Wr-q-1";
    \ar@{-} "Zr-q";(95, -40) *++!R{w_{r-q}} *\cir<4pt>{} = "Wr-q";
    \ar@{-} "Zr-q+1";(105, -40) *++!L{w_{r-q+1}} *\cir<4pt>{} = "Wr-q+1";
%	\ar@{-} (10,-1.5); (130, -1.5);
%	\ar@{-} (10,-1.5); (10, -28.5);
%	\ar@{-} (10,-28.5); (130, -28.5);
%	\ar@{-} (130,-1.5); (130, -28.5);
    \ar@{-} "X1"; "Y1";
    \ar@{-} "X1"; "Y2";
    \ar@{-} "X1"; "Yr-q-1";
    \ar@{-} "X1"; "Yr-q";
    \ar@{-} "X1"; "Yq-1";
    \ar@{-} "X1"; "Zr-q";
    \ar@{-} "X1"; "Zr-q+1";
    \ar@{-} "X2"; "Y1";
    \ar@{-} "X2"; "Y2";
    \ar@{-} "X2"; "Yr-q-1";
    \ar@{-} "X2"; "Yr-q";
    \ar@{-} "X2"; "Yq-1";
    \ar@{-} "X2"; "Zr-q";
    \ar@{-} "X2"; "Zr-q+1";
    \ar@{-} "X2q-r"; "Y1";
    \ar@{-} "X2q-r"; "Y2";
    \ar@{-} "X2q-r"; "Yr-q-1";
    \ar@{-} "X2q-r"; "Yr-q";
    \ar@{-} "X2q-r"; "Yq-1";
    \ar@{-} "X2q-r"; "Zr-q";
    \ar@{-} "X2q-r"; "Zr-q+1";
    \ar@{-} "Xq-1"; "Y1";
    \ar@{-} "Xq-1"; "Y2";
    \ar@{-} "Xq-1"; "Yr-q-1";
    \ar@{-} "Xq-1"; "Yr-q";
    \ar@{-} "Xq-1"; "Yq-1";
    \ar@{-} "Xq-1"; "Zr-q";
    \ar@{-} "Xq-1"; "Zr-q+1";
    \ar@{-} "Z1"; "Y1";
    \ar@{-} "Z1"; "Y2";
    \ar@{-} "Z1"; "Yr-q-1";
    \ar@{-} "Z1"; "Yr-q";
    \ar@{-} "Z1"; "Yq-1";
%    \ar@{-} "Z1"; "Zr-q";
%    \ar@{-} "Z1"; "Zr-q+1";
    \ar@{-} "Z2"; "Y1";
    \ar@{-} "Z2"; "Y2";
    \ar@{-} "Z2"; "Yr-q-1";
    \ar@{-} "Z2"; "Yr-q";
    \ar@{-} "Z2"; "Yq-1";
%    \ar@{-} "Z2"; "Zr-q";
%    \ar@{-} "Z2"; "Zr-q+1";
    \ar@{-} "Zr-q-1"; "Y1";
    \ar@{-} "Zr-q-1"; "Y2";
    \ar@{-} "Zr-q-1"; "Yr-q-1";
    \ar@{-} "Zr-q-1"; "Yr-q";
    \ar@{-} "Zr-q-1"; "Yq-1";
%    \ar@{-} "Zr-q-1"; "Zr-q";
%    \ar@{-} "Zr-q-1"; "Zr-q+1";
    \ar@{--} (82,6); (143, 6);
    \ar@{--} (82,6); (82, -35);
    \ar@{--} (82,-35); (143, -35);
    \ar@{--} (143,6); (143, -35);
	\ar@{} (0,0);(120,-19) *{\text{  $G_{q,r}[Z_{r-q+1}] = K_{r-q+1}$}};
\end{xy}

  \caption{The graph $G_{q,r}^{(5)}$}
  \label{fig:G_{q,r}^{(5)}}
\end{figure}
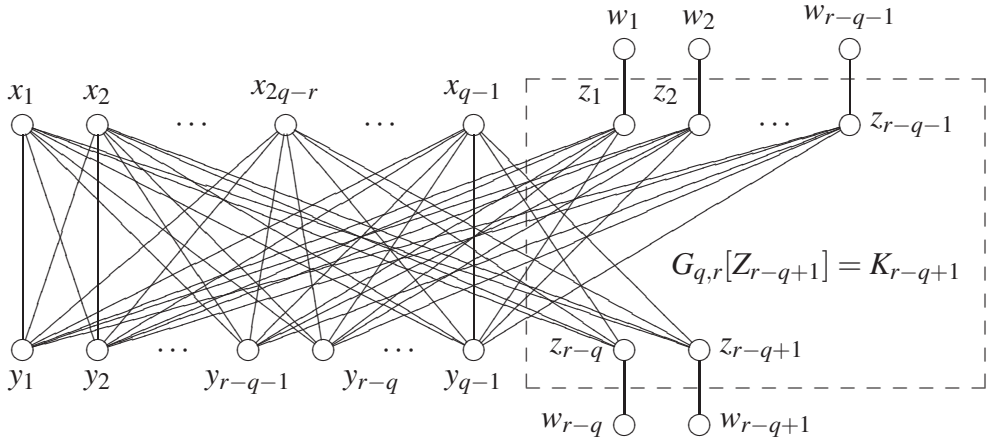

Now we prove ${\rm ind}\mathchar`-{\rm match}\left( G_{q,r}^{(5)} \right) = 1$, ${\rm min}\mathchar`-{\rm match}\left( G_{q,r}^{(5)} \right) = q$ and ${\rm match}\left( G_{q,r}^{(5)} \right) = r$.
\begin{itemize}
    \item Suppose that ${\rm ind}\mathchar`-{\rm match}\left( G_{q,r}^{(5)} \right) \geq 2$. 
    Then there exists an induced matching $I$ of $G_{q,r}^{(5)}$ with $|I| \geq 2$. 
    Since $V\left( G_{q,r}^{(5)} \right) \setminus N_{G_{q}^{(5)}}[z_{1}] = X_{q-1} \cup W_{r-q-1} \setminus \{w_{1}\}$ is an independent set of $G_{q,r}^{(5)}$, one has $z_{1} \not\in V(I)$ by Lemma \ref{ind1-3}. 
    Then $w_{1} \not\in V(I)$. 
    Similarly, we also have $z_{i}, w_{i} \not\in V(I)$ for all $2 \leq i \leq r - q - 1$.  
    Hence $V(I) \subset X_{q-1} \cup Y_{q-1}$. In particular, $I$ is an induced matching of the induced subgraph $G_{q,r}^{(5)}[X_{q-1} \cup Y_{q-1}]$. 
    Thus one has 
    \[
    2 \leq |I| \leq {\rm ind}\mathchar`-{\rm match}\left( G_{q,r}^{(5)}[X_{q-1} \cup Y_{q-1}] \right) = {\rm ind}\mathchar`-{\rm match}\left( K_{q-1, q-1} \right) = 1, 
    \]
    but this is a contradiction. 
    Therefore we have ${\rm ind}\mathchar`-{\rm match}\left( G_{q,r}^{(5)} \right) = 1$. 
    \item Since 
    \[
    \left\{ \bigcup_{i = 1}^{r-q-1} \{ y_{i}, z_{i} \} \right\} \ \cup \ \left\{ 
 \bigcup_{i = 1}^{2q - r} \{ x_{i}, y_{r-q-1+i} \} \right\} \ \cup \ \{z_{r-q}, z_{r-q+1}\}
    \]
    is a maximal matching of $G_{q,r}^{(5)}$, we have ${\rm min}\mathchar`-{\rm match}\left( G_{q,r}^{(5)} \right) \leq q$. 
    Note that the induced subgraph $G_{q,r}^{(5)}[X_{q-1} \cup Y_{q-1} \cup \{z_{1}, z_{r-q}\}]$ is isomorphic to $K_{q, q}$. 
    Hence, by using Lemma \ref{induced-subgraph}(2), one has 
    \begin{eqnarray*}
    {\rm min}\mathchar`-{\rm match}\left( G_{q,r}^{(5)} \right) &\geq& {\rm min}\mathchar`-{\rm match}\left( G_{q,r}^{(5)}[X_{q-1} \cup Y_{q-1} \cup \{z_{1}, z_{r-q}\}] \right) \\
    &=& {\rm min}\mathchar`-{\rm match}(K_{q, q}) \ = \ q. 
    \end{eqnarray*}
    Hence ${\rm min}\mathchar`-{\rm match}\left( G_{q,r}^{(5)} \right) = q$.  
    \item It follows that ${\rm match}\left( G_{q,r}^{(5)} \right) = r$ because $\left\{ \bigcup_{i = 1}^{q-1} \{ x_{i}, y_{i} \} \right\} \ \cup \ \left\{ \bigcup_{i = 1}^{r - q + 1} \{ z_{i}, w_{i} \} \right\}$ is a perfect matching of $G_{q,r}^{(5)}$. 
\end{itemize}
Since 
\begin{eqnarray*}
\left| E\left( G_{q, r}^{(5)} \right) \right| &=& (q-1)^2 + 2(q-1) + (q-1)(r-q-1) + \binom{r-q+2}{2}\\ 
&=& r(q-1) + \binom{r-q+2}{2},
\end{eqnarray*}
we have $\min\left( 1, q, r; |E| \right) \leq r(q-1) + \binom{r-q+2}{2}$. 

Next, let us consider the graph $G_{q, r-q, 0}^{(1)}$ which appears in \cite[1.3]{MY}. 
By virtue of \cite[Lemma 1.8]{MY}, it follows that  ind-match$\left( G_{q, r-q, 0}^{(1)} \right) = 1$, min-match$\left( G_{q, r-q, 0}^{(1)} \right) = q$ and match$\left( G_{q, r-q, 0}^{(1)} \right) = r$. 
Counting edges of $G_{q, r-q, 0}^{(1)}$, we have 
$\min\left( 1, q, r; |E| \right) \leq 2(r-q) + \binom{2q}{2}$. 
Therefore we have the desired conclusion. 
\end{proof}

\begin{Example}
Let $f_{1}(q, r)$ and $f_{2}(q, r)$ be functions appear in Theorem \ref{3rdMainThm}(3). 
\begin{enumerate} 
    \item[$(1)$] Since $f_{1}(10, 17) = 189$ and $f_{2}(10, 17) = 204$, we have $\min\left( 1, 10, 17; |E| \right) \leq 189$ by Theorem \ref{3rdMainThm}(3). 
    \item[$(2)$] Since $f_{1}(10, 18) = 207$ and $f_{2}(10, 18) = 206$, we have $\min\left( 1, 10, 18; |E| \right) \leq 206$ by Theorem \ref{3rdMainThm}(3).  
\end{enumerate}    
\end{Example}

Finally, we give upper bounds for $\min\left( p, q, r; |E| \right)$ in the case that $2 \leq p < q \leq r \leq 2q$ by utilizing Theorem \ref{Thm-ind} and Theorem \ref{Thm-min}. 

\begin{Theorem}\label{4thMainThm}
Let $p, q, r$ be integers with $2 \leq p < q \leq r \leq 2q$. Then
\begin{enumerate}
    \item[$(1)$] $\min\left( p, q, q; |E| \right) \leq (a_{1}^{2} + 1)p + (2a_{1} + 1)b_{1}$, where $a_{1}$ and $b_{1}$ are non-negative integers such that $q = a_{1}p + b_{1}$ and $0 \leq b_{1} \leq p-1$.  
    \item[$(2)$] If $q < r \leq 2q - p + 1$, we have 
    \[
    \min\left( p, q, r; |E| \right) \leq a_{2}^{2}(p - 1) + (2a_{2} + 1)b_{2} + p + \binom{2(r - q) + 1}{2}, 
    \]
    where $a_{2}$ and $b_{2}$ are non-negative integers such that $2q - r = a_{2}(p - 1) + b_{2}$ and $0 \leq b_{2} \leq p-2$.
    \item[$(3)$] If $2q - p + 1 < r \leq 2q$, we have
    \[
    \min\left( p, q, r; |E| \right) \leq p + 2q - r + (p - 2q + r)\binom{2a_{3}+1}{2} + b_3(4a_{3} + 3), 
    \]
    where $a_{3}$ and $b_{3}$ are non-negative integers such that $r - q = a_{3}(p - 2q + r) + b_{3}$ and $0 \leq b_{3} \leq p-2q + r - 1$.
\end{enumerate}
\end{Theorem}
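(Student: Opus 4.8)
The plan is to realize each bound by an explicit construction of the type produced by Theorems \ref{Thm-ind} and \ref{Thm-min}: I take $p$ connected graphs $H_1,\dots,H_p$, each with ${\rm ind}$-${\rm match}(H_i)=1$ and each carrying a distinguished vertex $v_i$ satisfying $(*_1)$ (or chosen arbitrarily when $H_i$ is complete bipartite), and glue them to one new vertex $v$. Theorem \ref{Thm-ind} then forces ${\rm ind}$-${\rm match}(G)=p$, while Theorem \ref{Thm-min} forces ${\rm min}$-${\rm match}(G)=\sum_i {\rm min}$-${\rm match}(H_i)$. The two families of building blocks I use are the balanced complete bipartite graphs $K_{c,c}$ (with ${\rm min}$-${\rm match}={\rm match}=c$, $|E|=c^2$, and every vertex satisfying $(*_2)$ by Remark \ref{condition*_remark}) and the whiskered graphs $G_m$ of \ref{1.5} (with ${\rm min}$-${\rm match}=\lceil m/2\rceil$, ${\rm match}=m$, $|E|=\binom{m+1}{2}$ by Lemma \ref{G_r}, and the vertex $x_k$ satisfying $(*_1)$). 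The role of $G_{2t}$ is that it contributes an excess ${\rm match}-{\rm min}$-${\rm match}=t$, the maximum possible, so these blocks absorb the gap $r-q$.

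For $(1)$, where $r=q$, I use only balanced blocks: writing $q=a_1p+b_1$, I take $b_1$ copies of $K_{a_1+1,a_1+1}$ and $p-b_1$ copies of $K_{a_1,a_1}$ (all genuine, since $p<q$ gives $a_1\ge 1$). Then $\sum c_i^2=pa_1^2+b_1(2a_1+1)$, and adding the $p$ star edges reproduces $(a_1^2+1)p+(2a_1+1)b_1$. For $(2)$ I use one whiskered block $G_{2(r-q)}$ (contributing ${\rm min}$-${\rm match}=r-q$, ${\rm match}=2(r-q)$, $|E|=\binom{2(r-q)+1}{2}$) together with $p-1$ balanced blocks whose sizes sum to $2q-r$; writing $2q-r=a_2(p-1)+b_2$ and splitting evenly gives $\sum c_i^2=(p-1)a_2^2+b_2(2a_2+1)$, and the condition $r\le 2q-p+1$ ensures each $c_i\ge 1$. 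For $(3)$ the gap is too large for one whiskered block, so I use $k:=p-2q+r$ whiskered blocks $G_{2t_1},\dots,G_{2t_k}$ with $\sum t_i=r-q$ together with $2q-r$ single-edge blocks $K_{1,1}$; here $r-q>k$ (as $q>p$) guarantees each $t_i\ge 1$. The even split $r-q=a_3k+b_3$ together with the identity $\binom{2a+3}{2}-\binom{2a+1}{2}=4a+3$ yields $\sum\binom{2t_i+1}{2}=k\binom{2a_3+1}{2}+b_3(4a_3+3)$, and adding the $2q-r$ single edges and the $p$ star edges reproduces the claimed value.

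In every case the equalities ${\rm ind}$-${\rm match}(G)=p$ and ${\rm min}$-${\rm match}(G)=q$ follow from Theorems \ref{Thm-ind} and \ref{Thm-min} once one checks each block has the required distinguished vertex (for a whiskered block, $x_k$ satisfies $(*_1)$, hence $(*_2)$ by Remark \ref{condition*_remark}, and condition $(a)$; a $K_{c,c}$ satisfies $(b)$ and $(*_2)$ at every vertex). The real content is pinning down ${\rm match}(G)=r$, which I handle uniformly: the disjoint union $\bigcup_i H_i$ is an induced subgraph of $G$, so ${\rm match}(G)\ge\sum_i{\rm match}(H_i)=r$ by Lemma \ref{induced-subgraph}$(3)$, while a direct count shows $|V(G)|=2r+1$ in all three constructions, whence ${\rm match}(G)\le r$. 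The main obstacle is bookkeeping rather than difficulty: one must choose the numbers of whiskered versus balanced/single-edge blocks so that the min-match sums to exactly $q$ and the match sums to exactly $r$ \emph{simultaneously}, and the three case-defining ranges of $r$ are precisely what forces $k=p-2q+r$ whiskered blocks and what guarantees $t_i\ge 1$ and $c_i\ge 1$; the decompositions $q=a_1p+b_1$, $2q-r=a_2(p-1)+b_2$ and $r-q=a_3k+b_3$ are exactly the even distributions that produce the stated edge counts $\sum c_i^2$ and $\sum\binom{2t_i+1}{2}$.
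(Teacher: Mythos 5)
Your proposal is correct and follows essentially the same route as the paper: identical building blocks ($K_{a,a}$/$K_{a+1,a+1}$ in case (1), the same balanced blocks plus one $G_{2(r-q)}$ in case (2), and $p-2q+r$ whiskered blocks $G_{2a_3}$/$G_{2a_3+2}$ plus $2q-r$ copies of $K_2$ in case (3)), glued to a new vertex and analyzed via Theorems \ref{Thm-ind} and \ref{Thm-min}, with the same edge counts. The only cosmetic difference is that you certify ${\rm match}(G)=r$ by combining Lemma \ref{induced-subgraph}(3) with the parity bound $|V(G)|=2r+1$, whereas the paper exhibits the union of the blocks' perfect matchings as a matching missing only the apex vertex; these are the same argument in substance.
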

\begin{proof}
(1) For each $1 \leq i \leq p$, let 
\[
H_{1} = \cdots = H_{p-b} = K_{a_{1}, a_{1}} \ \text{and} \ H_{p - b + 1} = \cdots = H_{p} = K_{a_{1}+1, a_{1} + 1}. 
\]
Note that there exists $v_{i} \in V(H_{i})$ which satisfies the condition $(*_{2})$ by Remark \ref{condition*_remark}(2). 
Let $G$ be the graph with 
\[
V(G) = \left\{ \bigcup_{i = 1}^{p} V(H_{i}) \right\} \ \cup \ \{v\} \ \ \text{and} \ \ E(G) = \left\{ \bigcup_{i = 1}^{p} E(H_{i}) \right\} \ \cup \ \left\{ \{v_{i}, v\} \mid 1 \leq i \leq p \right\}, 
\]
where $v$ is a new vertex.
Now we prove that ${\rm ind}\mathchar`-{\rm match}\left( G \right) = p$ and ${\rm min}\mathchar`-{\rm match}\left( G \right) = {\rm match}\left( G \right) = q$. 
\begin{itemize}
    \item Since ${\rm ind}\mathchar`-{\rm match}(H_{i}) = 1$ holds for each $1 \leq i \leq p$, one has ${\rm ind}\mathchar`-{\rm match}\left( G \right) = p$ by Theorem \ref{Thm-ind}. 
    \item By virtue of Theorem \ref{Thm-min}, we have
    \begin{eqnarray*}
    {\rm min}\mathchar`-{\rm match}(G) &=& \sum_{i = 1}^{p} {\rm min}\mathchar`-{\rm match}(H_{i}) \\
    &=& \sum_{i = 1}^{p-b_{1}} {\rm min}\mathchar`-{\rm match}(K_{a_{1}, a_{1}}) + \sum_{i = p - b + 1}^{b} {\rm min}\mathchar`-{\rm match}(K_{a_{1} + 1, a_{1} + 1}) \\
    &=& (p - b_{1})a_{1} + b_{1}(a_{1} + 1)  
    \ = \ a_{1}p + b_{1} \ = \ q. 
    \end{eqnarray*}
    \item Note that $H_{i}$ has a perfect matching $M_{i} \subset E(H_{i})$ for all $1 \leq i \leq p$. 
    Let $M = \bigcup_{i = 1}^{p} M_{i}$. 
    Then $M$ is a matching of $G$ with $|V(M)| = |V(G)| - 1$. 
    Hence one has 
    \[
    {\rm match}(G) = |M| = \sum_{i = 1}^{p} |M_{i}| = \sum_{i = 1}^{p} \frac{|V(H_{i})|}{2} = \sum_{i = 1}^{p-b_{1}} a_{1} + \sum_{i = p - b_{1} + 1}^{b_{1}} (a_{1} + 1) = q.
    \]
\end{itemize}
Therefore we have 
\begin{eqnarray*}
\min\left( p, q, q; |E| \right) \leq |E(G)| \ = \ \sum_{i = 1}^{p} |E(H_{i})| \ + \ p 
&=& \sum_{i = 1}^{p - b_{1}} a_{1}^{2} \ + \ \sum_{i = p - b_{1} + 1}^{p} \left( a_{1} + 1 \right)^{2} \ + \ p \\
&=& (p - b_{1})a_{1}^{2} + b_{1}\left( a_{1} + 1 \right)^{2} \ + \ p \\
&=& \left( a_{1}^{2} + 1 \right)p + (2a_{1} + 1)b_{1}. 
\end{eqnarray*}

(2) Assume that $q < r \leq 2q - p + 1$. 
Then $2q - r \geq p - 1$. 
Hence we can take non-negative integers $a_{2}, b_{2}$ such that $2q - r = a_{2}(p - 1) + b_{2}$ and $0 \leq b_{2} \leq p-2$. 
For each $1 \leq i \leq p$, let 
\[
H'_{1} = \cdots = H'_{p - b_{2} - 1} = K_{a_{2}, a_{2}}, \ \ H'_{p - b_{2}} = \cdots = H'_{p - 1} = K_{a_{2} + 1, a_{2} + 1}
\]
and $H'_{p} = G_{2(r-q)}$ which appears in \ref{1.5}. 
Note that there exists $v'_{i} \in V(H'_{i})$ which satisfies the condition $(*_{2})$ by Remark \ref{condition*_remark}(1),(2) and ${\rm ind}\mathchar`-{\rm match}(H'_{p}) = 1$ from Lemma \ref{G_r}. 
Let $G'$ be the graph with 
\[
V(G') = \left\{ \bigcup_{i = 1}^{p} V(H'_{i}) \right\} \ \cup \ \{v'\} \ \ \text{and} \ \ E(G') = \left\{ \bigcup_{i = 1}^{p} E(H'_{i}) \right\} \ \cup \ \left\{ \{v'_{i}, v'\} \mid 1 \leq i \leq p \right\}, 
\]
where $v'$ is a new vertex.
Now we prove that ${\rm ind}\mathchar`-{\rm match}\left( G' \right) = p$, ${\rm min}\mathchar`-{\rm match}\left( G' \right) = q$ and ${\rm match}\left( G' \right) = r$. 
\begin{itemize}
    \item As the same argument in (1), one has ${\rm ind}\mathchar`-{\rm match}\left( G' \right) = p$. 
    \item By virtue of Theorem \ref{Thm-min}, we have 
    \begin{eqnarray*}
    & & {\rm min}\mathchar`-{\rm match}(G') \\ 
    &=& \sum_{i = 1}^{p} {\rm min}\mathchar`-{\rm match}(H'_{i}) \\ 
    &=& \sum_{i = 1}^{p - b_{2} - 1} {\rm min}\mathchar`-{\rm match}(K_{a_{2}, a_{2}}) \ + \ \sum_{i = p - b_{2}}^{p - 1} {\rm min}\mathchar`-{\rm match}(K_{a_{2} + 1, a_{2} + 1}) \ + \ {\rm min}\mathchar`-{\rm match}\left( G_{2(r-q)} \right) \\ 
    &=& (p - b_{2} - 1)a_{2} \ + \ b_{2}(a_{2} + 1)\ + \ r - q \\
    &=& a_{2}(p - 1) + b_{2} + r - q \ = \ 2q - r + r - q \ = \ q. 
    \end{eqnarray*}
    \item As the same argument in (1), we have 
    \begin{eqnarray*}
    {\rm match}(G') \ = \ \sum_{i = 1}^{p} \frac{|V(H'_{i})|}{2} &=& (p - b_{2} - 1)a_{2} + b_{2}(a_{2} + 1) + 2(r - q) \\
    &=& a_{2}(p - 1) + b_{2} + 2(r - q) \\
    &=& 2q - r + 2(r - q) \ = \ r.
    \end{eqnarray*}
\end{itemize}
Therefore we have 
\begin{eqnarray*}
\min\left( p, q, r; |E| \right) \leq |E(G')| &=& \sum_{i = 1}^{p} |E(H'_{i})| \ + \ p \\
&=& \sum_{i = 1}^{p - b_{2} - 1} a_{2}^{2} \ + \ \sum_{i = p - b_{2}}^{p - 1} \left( a_{2} + 1 \right)^{2} \ + \ \binom{2(r-q) + 1}{2} \ + \ p \\
&=& (p - b_{2} - 1)a_{2}^{2} \ + \ b_{2}\left( a_{2} + 1 \right)^{2} \ + \ \binom{2(r-q) + 1}{2} \ + \ p \\
&=& a_{2}^{2}(p - 1) + (2a_{2} + 1)b_{2} + p + \binom{2(r - q) + 1}{2}. 
\end{eqnarray*} 

(3) Assume that $2q - p + 1 < r \leq 2q$. 
Then $p - 2q + r > 1$. 
Since $p < q$, one has $r - q - (p - 2q + r) = q - p > 0$. 
Hence we can take non-negative integers $a_{3}, b_{3}$ such that $r - q = a_{3}(p - 2q + r) + b_{3}$ and $0 \leq b_{3} \leq p - 2q + r - 1$. 
For each $1 \leq i \leq p$, let 
\[
H''_{1} = \cdots = H''_{p - 2q + r - b_{3}} = G_{2a_{3}}, \ \ H''_{p - 2q + r - b_{3} + 1} = \cdots = H''_{p - 2q + r} = G_{2a_{3} + 2}
\]
and $\displaystyle H''_{p - 2q + r + 1} = \cdots = H''_{p} = K_{2}$. 
Note that there exists $v''_{i} \in V(H''_{i})$ which satisfies the condition $(*_{2})$ by Remark \ref{condition*_remark}(1),(2) and ${\rm ind}\mathchar`-{\rm match}(H''_{p}) = 1$ from Lemma \ref{G_r}. 
Let $G''$ be the graph with 
\[
V(G'') = \left\{ \bigcup_{i = 1}^{p} V(H''_{i}) \right\} \ \cup \ \{v''\} \ \ \text{and} \ \ E(G'') = \left\{ \bigcup_{i = 1}^{p} E(H''_{i}) \right\} \ \cup \ \left\{ \{v''_{i}, v''\} \mid 1 \leq i \leq p \right\}, 
\]
where $v''$ is a new vertex.
Now we prove that ${\rm ind}\mathchar`-{\rm match}\left( G'' \right) = p$, ${\rm min}\mathchar`-{\rm match}\left( G'' \right) = q$ and ${\rm match}\left( G'' \right) = r$. 
\begin{itemize}
    \item As the same argument in (1), one has ${\rm ind}\mathchar`-{\rm match}\left( G'' \right) = p$. 
    \item By virtue of Theorem \ref{Thm-min}, we have 
    \begin{eqnarray*}
    & & {\rm min}\mathchar`-{\rm match}\left( G'' \right) \\ 
    &=& \sum_{i = 1}^{p} {\rm min}\mathchar`-{\rm match}\left( H''_{i} \right) \\
    &=& \sum_{i = 1}^{p - 2q + r - b_{3}} {\rm min}\mathchar`-{\rm match}\left( G_{2a_{3}} \right) \ + \ \sum_{i = p - 2q + r - b_{3} + 1}^{p - 2q + r} {\rm min}\mathchar`-{\rm match}\left( G_{2a_{3} + 2} \right) \\
    & &+ \ \sum_{p - 2q + r + 1}^{p} {\rm min}\mathchar`-{\rm match}\left( K_{2} \right) \\
    &=& (p - 2q + r - b_{3})a_{3} \ + \ b_{3}(a_{3} + 1) \ + \ 2q - r \\
    &=& a_{3}(p - 2q + r) + b_{3} + 2q - r \ = \ r - q + 2q - r \ = \ q. 
    \end{eqnarray*}
    \item As the same argument in (1), we have 
    \begin{eqnarray*}
    {\rm match}(G'') \ = \ \sum_{i = 1}^{p} \frac{|V(H''_{i})|}{2} &=& 2(p - 2q + r - b_{3})a_{3} \ + \ b_{3}(2a_{3} + 2) \ + \ 2q - r \\
    &=& 2\left\{ a_{3}(p - 2q + r) + b_{3} \right\} \ + 2q - r \\
    &=& 2(r - q) \ + \ 2q - r \ = \ r. 
    \end{eqnarray*}
\end{itemize}
Therefore one has 
\begin{eqnarray*}
\min\left( p, q, r; |E| \right) &\leq& |E(G'')| \\ &=& \sum_{i = 1}^{p} |E(H''_{i})| \ + \ p \\
&=& \sum_{i = 1}^{p - 2q + r - b_{3}} \binom{2a_{3} + 1}{2} \ + \ \sum_{i = p - 2q + r - b_{3} + 1}^{p - 2q + r} \binom{2a_{3} + 3}{2} \ + \ 2q - r + p \\
&=& (p - 2q + r - b_{3}) \binom{2a_{3} + 1}{2} \ + \ b_{3}\binom{2a_{3} + 3}{2} \ + \ 2q - r + p \\
&=& p + 2q - r + (p - 2q + r - b_{3}) \binom{2a_{3} + 1}{2} \ + \ b_{3}\binom{2a_{3} + 3}{2} \\
&=& p + 2q - r + (p - 2q + r) \binom{2a_{3} + 1}{2} + b_{3}(4a_{3} + 3). 
\end{eqnarray*}
\end{proof}

\begin{Example}\label{Ex2}
\begin{enumerate}
    \item[$(1)$] Since $p+1 = 1 \cdot p + 1$, one has $\min\left( p, p+1, p+1; |E| \right) \leq 2p + 3$ for all $p \geq 2$ by Theorem \ref{4thMainThm}(1). 
    \item[$(2)$] If $q = p + 1$ and $r = p + 2$, then $2q - p + 1 = p + 3 > r$. 
    Since $2q - r = p = 1 \cdot (p - 1) + 1$, we have $\min\left( p, p + 1, p + 2; |E| \right) \leq 2p + 5$ for all $p \geq 2$ by Theorem \ref{4thMainThm}(2). 
    \item[$(3)$] If $q = p + 1$ and $r = p + 4$, then $2q - p + 1 = p + 3 < r$. 
    Since $r - q = 3 = 1 \cdot 2 + 1$, one has $\min\left( p, p + 1, p + 4; |E| \right) \leq 2p + 11$ for all $p \geq 2$ by Theorem \ref{4thMainThm}(3).  
\end{enumerate}
\end{Example}

\section{Question}

Recall that ${\rm ind}\mathchar`-{\rm match}(G) \leq {\rm min}\mathchar`-{\rm match}(G)$ holds for all graph $G$ and a characterization of connected graphs $G$ with ${\rm ind}\mathchar`-{\rm match}(G) = {\rm min}\mathchar`-{\rm match}(G)$ is given \cite[Theorem 3.3]{HHKT}. 
It is interesting to find classes of connected graphs $G$ with ${\rm ind}\mathchar`-{\rm match}(G) = {\rm min}\mathchar`-{\rm match}(G)$. 

\begin{Question}\label{tree}
Does ${\rm ind}\mathchar`-{\rm match}(T) = {\rm min}\mathchar`-{\rm match}(T)$ hold for all tree $T$? 
\end{Question}

\begin{Theorem}
Let $p, q, r$ be integers with $2 \leq p \leq q \leq r \leq 2q$. 
Assume that Question \ref{tree} is true. 
Then one has
\begin{enumerate}
    \item[$(1)$] $\min\left( p, p+1, p+1; |E| \right) = 2p + 3$ holds. 
    \item[$(2)$] $\min\left( p, q, r; |E| \right)  = 2r - 1$ if and only if $p = q$. 
\end{enumerate}
\end{Theorem}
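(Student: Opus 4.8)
The plan is to pair the constructions already available from Section 3, which supply the upper bounds, with new lower bounds squeezed out of the hypothesis that every tree $T$ satisfies ${\rm ind}$-${\rm match}(T) = {\rm min}$-${\rm match}(T)$. The one mechanism behind both parts is an edge-count rigidity statement: for any connected graph $G$ with ${\rm match}(G) = r$ we have $|V(G)| \geq 2r$ by the basic inequality $2{\rm match}(G) \leq |V(G)|$, whence $|E(G)| \geq |V(G)| - 1 \geq 2r - 1$ by connectivity, and the chain of equalities $|E(G)| = |V(G)| - 1$ holds exactly when $G$ is a tree. So any graph realizing the triple $(p,q,r)$ with the minimal possible number $2r-1$ of edges is forced to be a tree on $2r$ vertices, at which point the tree hypothesis collapses ${\rm ind}$-${\rm match}$ and ${\rm min}$-${\rm match}$ and yields $p = q$.

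For part (1) the upper bound $\min(p, p+1, p+1; |E|) \leq 2p+3$ is already on record in Example \ref{Ex2}(1). For the reverse inequality I would argue by contradiction. Suppose a connected $G$ realizes $(p, p+1, p+1)$ with $|E(G)| = 2p+2$. Applying Theorem \ref{1stMainThm}(3) with $q = r = p+1$ gives $|V(G)| \geq 2p+3$, so $2p+2 = |E(G)| \geq |V(G)| - 1 \geq 2p+2$ forces equality throughout and hence makes $G$ a tree. The tree hypothesis then gives $p = {\rm ind}$-${\rm match}(G) = {\rm min}$-${\rm match}(G) = p+1$, which is absurd. Since Corollary \ref{LowerBound}(2) already gives $\min(p, p+1, p+1; |E|) \geq 2p+2$, ruling out the value $2p+2$ yields $\min(p, p+1, p+1; |E|) \geq 2p+3$, and equality follows.

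For part (2) the forward implication $p = q \Rightarrow \min(p,q,r;|E|) = 2r-1$ is precisely Theorem \ref{2ndMainThm}(2) when $p = q < r$. For the converse, assume $\min(p,q,r;|E|) = 2r-1$ and pick a connected $G$ realizing $(p,q,r)$ with $|E(G)| = 2r-1$. The case $q = r$ is excluded immediately, since Corollary \ref{LowerBound}(2) would force $|E(G)| \geq 2r$; thus $q < r$. Then $|V(G)| \geq 2r$ together with $2r-1 = |E(G)| \geq |V(G)| - 1 \geq 2r-1$ pins down $|V(G)| = 2r$ and makes $G$ a tree, so $p = q$ by the hypothesis.

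The substantive point throughout is not the existence of realizing graphs, which Section 3 supplies, but the rigidity step forcing the extremal edge count to be attained only by a tree, fed by the sharp vertex counts of Theorem \ref{1stMainThm}; this is exactly where Question \ref{tree} enters. I would also flag the boundary triple $p = q = r$: there Theorem \ref{2ndMainThm}(3) gives $\min = 2r$ rather than $2r-1$, so the equivalence in (2) is to be read for $q < r$, the forward direction failing (yielding $2r$) precisely when $q = r$, while the converse remains vacuously valid.
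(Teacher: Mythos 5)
Your proof is correct and takes essentially the same route as the paper's: the upper bounds come from Example \ref{Ex2}(1) and Theorem \ref{2ndMainThm}(2), and the lower bounds combine the vertex bound $|V(G)| \geq 2r+1$ (equivalently, Corollary \ref{notPM} or Theorem \ref{1stMainThm}(3)) with the fact that a connected graph attaining $|E| = |V| - 1$ is a tree, at which point Question \ref{tree} forces $p = q$ --- your contradiction/contrapositive phrasing is logically the same argument the paper gives directly. Your flag on the boundary triple $p = q = r$ is also a genuine catch: the paper's proof of (2) only invokes Theorem \ref{2ndMainThm}(2), which requires $q < r$, so the stated equivalence must indeed be read with $q < r$ since Theorem \ref{2ndMainThm}(3) yields $2r$ rather than $2r - 1$ when $p = q = r$.
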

\begin{proof}
(1) Let $G$ be a connected graph with ${\rm ind}\mathchar`-{\rm match}(G) = p$ and ${\rm min}\mathchar`-{\rm match}(G) = {\rm match}(G) = p + 1$. 
Then we note that
\begin{itemize}
    \item $G$ is not a tree by assumption. Hence we have $|E(G)| \geq |V(G)|$. 
    \item $G$ does not have any perfect matching by Corollary \ref{notPM}. 
    Thus it follows that $|V(G)| \geq 2{\rm match}(G) + 1$.  
\end{itemize}
Hence one has $|E(G)| \geq 2{\rm match}(G) + 1 = 2p + 3$. 
Thus $\min\left( p, p+1, p+1; |E| \right) \geq 2p + 3$. 
Moreover, $\min\left( p, p+1, p+1; |E| \right) \leq 2p + 3$ holds from Example \ref{Ex2}(1). 
Therefore we have $\min\left( p, p+1, p+1; |E| \right) = 2p + 3$.  

(2) We remark that $\min\left( q, q, r; |E| \right) = 2r - 1$ holds from Theorem \ref{2ndMainThm}(2). 
Let $G$ be a connected graph with ${\rm ind}\mathchar`-{\rm match}(G) = p$, ${\rm min}\mathchar`-{\rm match}(G) = q$ and ${\rm match}(G) = r$. 
If $p \neq q$, then $G$ is not a tree by assumption. 
Hence one has $|E(G)| \geq |V(G)| \geq 2{\rm match}(G) = 2r$. 
Thus $\min\left( p, q, r; |E| \right) \geq 2r$ if $p \neq q$. 
Therefore we have the desired conclusion.
\end{proof}

\bigskip

\noindent
{\bf Acknowledgment.}
The first author was partially supported by JSPS Grants-in-Aid for Scientific Research (JP24K06661, JP24K14820, JP20KK0059).   

\bigskip

\end{document}